\documentclass[12pt]{amsart}
\usepackage[hyphens]{url} % 'hyphens' option allows line breaks after "-" characters
\usepackage[colorlinks,allcolors=blue]{hyperref}
\usepackage{amssymb}
\usepackage{amsthm}
\usepackage{amsfonts}
\usepackage{mathtools}
\usepackage{amsmath}
\usepackage[all,cmtip]{xy}
\usepackage{fullpage}
\usepackage{mathrsfs}
\usepackage{graphicx}
\usepackage{tensor}
\usepackage{tikz}
\usepackage{tikz-cd}
\usepackage{quiver}
\usetikzlibrary{matrix}
\usetikzlibrary{arrows}
\usepackage{float}

\usepackage{kimballstyle}

\usepackage[style=alphabetic]{biblatex}
\addbibresource{omega-1-strictifications-thesis.bib}

\tikzset{
    labl/.style={anchor=south, rotate=90, inner sep=.5mm}
}

\setcounter{tocdepth}{1}
%-------------------------------------------------------------------------------
\usepackage[textsize = tiny]{todonotes}

\setlength{\marginparwidth}{2cm}
%-------------------------------------------------------------------------------

\begin{document}
\title{An Enriched Approach to the Strictification of $(\infty,1)$-categories}
\author{Kimball Strong}
\address{Department of Mathematics, Cornell University, Ithaca, NY 14853}
\email{ks2424@cornell.edu}
\date{}
%-------------------------------------------------------------------------------
\begin{abstract}
We define a functor which takes in an $(\infty,1)$-category and outputs an $(\omega,1)$-category, the natural maximally ``strict'' version of an $(\infty,1)$-category. We do this by modeling $(\infty,1)$-categories as categories enriched in $\infty$-groupoids, and then ``locally strictifying'' (applying the strictification of $\infty$-groupoids to each mapping space) to obtain a category enriched in $\omega$-groupoids with respect to the Gray tensor product, followed by ``globally strictifying'' (strictifying the enrichment from the Gray tensor product to the cartesian product) to obtain a category cartesian-enriched in $\omega$-groupoids, which is equivalently an $(\omega,1)$-category. We prove that this functor is conservative, generalizing the Homological Whitehead Theorem. 
\end{abstract}

\maketitle
\tableofcontents
\section*{Introduction}
The singular homology of a space is one of the most useful invariants of algebraic topology: it is strong enough to distinguish many spaces, yet frequently very computable. 
Part of its utility comes from the Homological Whitehead Theorem, which in its generalization to non-simply connected spaces states:
\begin{theorem}[Generalized Homological Whitehead Theorem]\label{thm:generalized-whitehead-groups}
Let $f: X \to Y$ be a map of spaces. If
\begin{itemize}
	\item $f$ induces an isomorphism on $\pi_0$ , and
	\item for any choice of basepoint $x_0$, $f$ induces an isomorphism $\pi_1(X,x_0) \to \pi_1(Y,f(x_0))$, and
	\item for any choice of basepoint, the induced map of universal covers $\widehat{f}: \widehat{X}_{x_0} \to \widehat{Y}_{f(x_0)}$ induces an isomorphism on homology,
\end{itemize}
then $f$ is a weak equivalence.
\end{theorem}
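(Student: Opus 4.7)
The plan is to reduce the general statement to the classical simply-connected Whitehead theorem applied to universal covers. First I would use the hypothesis on $\pi_0$ to restrict to a single pair of path components at a time: weak equivalence is componentwise, the map $f$ pairs up components of $X$ with components of $Y$ bijectively, and all remaining hypotheses depend only on the component of a chosen basepoint. So without loss of generality $X$ and $Y$ are path-connected and $f$ carries a basepoint $x_0 \in X$ to $y_0 := f(x_0) \in Y$.

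Next I would lift to universal covers: after choosing compatible basepoints in the fibers over $x_0$ and $y_0$, the map $f$ lifts uniquely to a map $\widehat{f}: \widehat{X}_{x_0} \to \widehat{Y}_{y_0}$ between simply-connected spaces, and this lift induces an isomorphism on integral homology by hypothesis. I would then invoke the classical simply-connected Whitehead theorem, provable by applying the relative Hurewicz theorem to the mapping cylinder pair $(M_{\widehat{f}}, \widehat{X}_{x_0})$: simple connectivity turns the vanishing of relative homology into the vanishing of relative homotopy, whence $\widehat{f}$ is a weak equivalence.

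Finally I would push the conclusion back down. Since each covering projection $\widehat{X}_{x_0} \to X$ (resp.\ $\widehat{Y}_{y_0} \to Y$) induces isomorphisms on $\pi_n$ for all $n \geq 2$, and $\widehat{f}$ is a weak equivalence, $f$ is a $\pi_n$-isomorphism for $n \geq 2$ at the chosen basepoint; combined with the hypotheses on $\pi_0$ and $\pi_1$ at every basepoint this gives that $f$ is a weak equivalence. The main obstacle is purely technical: the argument relies on universal covers being available and on the standard forms of the Hurewicz and simply-connected Whitehead theorems, which in turn require regularity hypotheses such as the spaces being of the homotopy type of CW complexes or semi-locally simply-connected. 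In the CW or simplicial setting tacitly operative in this paper this regularity is automatic, but in general one must first pass through a CW approximation, which is functorial up to homotopy and preserves all of the given hypotheses.
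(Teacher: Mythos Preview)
Your argument is correct and is the standard classical route: reduce to path components, lift to universal covers, apply the simply-connected homological Whitehead theorem via the relative Hurewicz theorem, then descend using that covering projections are $\pi_n$-isomorphisms for $n\ge 2$. Your caveat about CW-type hypotheses is also appropriate.

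The paper, however, does not actually prove this theorem. It is stated in the introduction as a classical result due to Whitehead, and the paper's only contribution is to \emph{rephrase} it: using the Brown--Higgins fundamental $\omega$-groupoid functor $\Pi_\omega$ and the identification of its homotopy groups with $\pi_0$, $\pi_1$, and the higher homology of the universal cover, the theorem becomes the statement that $\Pi_\omega$ (equivalently $\st_0$) is conservative. That reformulation is then cited to \cite{Brown_1981} and \cite{Whitehead1948} rather than proved. So there is no in-paper proof to compare against; your argument supplies exactly what the paper takes as background, and is consistent with the Brown--Higgins perspective since the hypotheses you use are precisely what an isomorphism on $\pi_*(\Pi_\omega(-))$ unpacks to.
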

Originally due to Whitehead, this theorem was reinterpreted due to work of Brown and Higgins: they defined objects they called $\infty$-groupoids, later called strict $\infty$-groupoids, or as we shall refer to them \textit{$\omega$-groupoids} \cite{Brown_1981}.
As the name suggests, $\omega$-groupoids are a ``strict'' version of Grothendieck's $\infty$-groupoids, in the sense that whereas $\infty$-groupoids are to be thought as infinite dimensional groupoids in which all algebraic laws hold only up to coherent higher homotopy, $\omega$-groupoids are infinite dimensional groupoids in which all algebraic laws hold up to equality, e.g. composition of $n$-morphisms is genuinely associative.
$\omega$-groupoids have a natural notion of homotopy group, analogous to the automorphism group of a groupoid. 
This gives them a natural notion of weak equivalence, which can be extended to the structure of a model category \cite{BrownGolasinski1989}. 
Brown and Higgins define a functor $\Pi_\omega: \Top \to \stinfty$, called the ``fundamental $\omega$-groupoid'' functor, and show that for a space $X$ there are natural isomorphisms
$$\pi_0(\Pi_\omega(X)) \cong \pi_0(X) \quad \pi_1(\Pi_\omega(X), x_0) \cong \pi_1(X,x_0) \quad \pi_n(\Pi_\omega(X),x_0) \cong \pi_n(\widehat{X}_{x_0}, x_0)$$
where again $\widehat{X}_{x_0}$ is the universal cover of $X$ at $x_0$. From this it follows that Theorem \ref{thm:generalized-whitehead-groups} can be equivalently phrased as 
\begin{theorem}[{\cite{Brown_1981} \cite{Whitehead1948}}]\label{thm:strictification-infinity-groupoids-conservative}
The functor $\Pi_\omega: \Top \to \stinfty$ is conservative; i.e. reflects weak equivalences.
\end{theorem}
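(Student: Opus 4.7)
The plan is to deduce this statement directly from the Generalized Homological Whitehead Theorem (Theorem \ref{thm:generalized-whitehead-groups}) via the Brown--Higgins natural isomorphisms recalled just above. Given a map $f : X \to Y$ with $\Pi_\omega(f)$ a weak equivalence in $\stinfty$, my aim is to verify the three hypotheses of Theorem \ref{thm:generalized-whitehead-groups} and conclude.

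First I would unpack what it means for $\Pi_\omega(f)$ to be a weak equivalence in the Brown--Golasi\'nski model structure on $\stinfty$: by construction, weak equivalences are exactly the maps inducing isomorphisms on $\pi_0$ and on $\pi_n$ at every basepoint for each $n \geq 1$. Substituting into the three Brown--Higgins natural isomorphisms then translates this into the assertions that (i) $f$ induces a bijection on $\pi_0$; (ii) $f$ induces an isomorphism on $\pi_1$ at every basepoint; and (iii) for every $x_0 \in X$ and every $n \geq 2$, the induced map of universal covers $\widehat{f} : \widehat{X}_{x_0} \to \widehat{Y}_{f(x_0)}$ induces an isomorphism on $\pi_n$.

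Conditions (i) and (ii) are exactly the first two hypotheses of Theorem \ref{thm:generalized-whitehead-groups}. For the third, I would observe that $\widehat{X}_{x_0}$ and $\widehat{Y}_{f(x_0)}$ are simply connected, so (iii) upgrades to the statement that $\widehat{f}$ induces isomorphisms on all $\pi_n$ with $n \geq 1$; the classical Hurewicz theorem, applied inductively to the simply-connected spaces in play, then forces $\widehat{f}$ to induce isomorphisms on all homology groups. With all three hypotheses of Theorem \ref{thm:generalized-whitehead-groups} in hand, the conclusion that $f$ is a weak equivalence follows immediately, proving conservativity.

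The main obstacle I anticipate is bookkeeping around naturality of the isomorphism $\pi_n(\Pi_\omega(X), x_0) \cong \pi_n(\widehat{X}_{x_0}, x_0)$ in $f$: I need to know that the map induced on $\pi_n\Pi_\omega(X)$ by $\Pi_\omega(f)$ corresponds under this isomorphism to the map induced by the lift $\widehat{f}$ with compatible choice of basepoint in $\widehat{Y}_{f(x_0)}$. This amounts to checking that Brown and Higgins' identification is natural with respect to based maps of spaces, which should be inspected directly from their construction of $\Pi_\omega$; once this compatibility is established, the argument collapses to the reduction sketched above.
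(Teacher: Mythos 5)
Your proposal is correct and matches the paper's own argument, which consists of nothing more than translating the weak-equivalence condition in $\stinfty$ (isomorphisms on $\pi_0$ and on all $\pi_n$ at every basepoint) through the displayed natural isomorphisms and invoking Theorem \ref{thm:generalized-whitehead-groups}. One caveat: the third displayed isomorphism should identify $\pi_n(\Pi_\omega(X),x_0)$ for $n\ge 2$ with the \emph{homology} $H_n(\widehat{X}_{x_0})$ of the universal cover rather than with its homotopy groups --- this is the actual Brown--Higgins computation, and it is what makes the statement a homological Whitehead theorem rather than a tautology, since $\pi_n(\widehat{X}_{x_0})\cong\pi_n(X,x_0)$ for $n\ge 2$ would render conservativity immediate from the definition of weak equivalence and make the detour through Theorem \ref{thm:generalized-whitehead-groups} pointless. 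Read with homology, your condition (iii) is verbatim the third hypothesis of Theorem \ref{thm:generalized-whitehead-groups}, the Hurewicz interpolation from homotopy to homology drops out, and the naturality bookkeeping you flag is the only remaining content of the argument.
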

By Grothendieck's Homotopy Hypothesis, which states that $\infty$-groupoids have a homotopy theory equivalent to topological spaces, we think of $\Pi_\omega$ as presenting a functor $\st_0 : \infty\text{Gpd} \to \stinfty$, and interpret it as the ``strictification'' of an $\infty$-groupoid. 
This functor is a left adjoint to a natural inclusion $\iota_0: \stinfty \hookrightarrow \infty\text{Gpd}$.
So, another rephrasing of the same theorem is that \emph{strictification of $\infty$-groupoids is conservative}. 
\par $\infty$-groupoids are a special case (the $n=0$ case) of objects called $(\infty,n)$-categories, which especially in the case $n=1$ have become of primary importance in doing homotopical mathematics.
Similarly,
$\omega$-groupoids are themselves a special case of objects called $(\omega,n)$-categories, which can be thought of as strict versions of $(\infty,n)$-categories (Definition \ref{dfn:omega-n-cats}). 
We might expect for any $n$ to have an adjunction $\st_n: (\infty,n)\text{Cat}  \rightleftarrows \omegancat{n}: \iota_n$. 
Optimistically, we might hope that just as the functor $\st_0$ captures a large amount of information about spaces despite $\omega$-groupoids being inherently algebraic, $\st_n$ may also manage to provide a relatively strong invariant for $(\infty,n)$-categories despite $(\omega,n)$-categories being both fully algebraic and free from the subtleties of homotopy coherence that make $(\infty,n)$-categories so tricky to understand. 
\par In this paper we will verify a piece of this optimism by giving an enriched construction of $\text{St}_1$ and proving that it is conservative:
\begin{alphatheorem}[Theorem \ref{thm:infinity-one-conservativity-precisely}]\label{alphathm:infinity-one-conservativity}
There is a functor $\st_1: (\infty,1)\text{Cat} \to (\omega,1)\text{Cat}$, left adjoint to a natural inclusion functor. If $\C$ and $\D$ are $(\infty,1)$-categories, then
then a functor $F: \C \to \D$ of $(\infty,1)$-categories is an equivalence if and only if the strictification $\st_1(F): \st_1\C \to \st_1\D$ is an equivalence.
\end{alphatheorem}
By ``equivalence,'' we mean that both categories are model categories, with a natural notion of weak equivalence.
In light of the comparison with the $(\infty, 0)$ case, we regard this as a sort of ``Homological Whitehead Theorem for $(\infty,1)$ categories.'' 
It is not hard to show that Theorem \ref{alphathm:infinity-one-conservativity} immediately implies the verison for $\infty$-groupoids, Theorem \ref{thm:strictification-infinity-groupoids-conservative}.
\par Our approach is based on splitting the functor $\st_1$ into two parts as follows: 
$(\infty,1)$-categories can be modeled by categories weakly enriched in $\infty$-groupoids.
By strictifying every mapping $\infty$-groupoid, we obtain a category enriched in $\omega$-groupoids. 
However, this enrichment is with respect to a ``weak'' product (the Gray tensor product), and so we must further strictify the \textit{enrichment} to obtain a category enriched in $\omega$-groupoids (with respect to the cartesian product), which is equivalently an $(\omega,1)$-category.
If we think of strictification as a ``linearization,'' then this composite
\begin{center}
\begin{tikzcd}
(\infty,1)\cat \cong \infty\gpdcat \ar[r, "\leftone"] 
	& \stinfty^{\text{weak}}\text{-}\cat \ar[r, "\lefttwo"] & \stinfty^{\text{strict}}\text{-}\cat \cong (\omega,1)\cat
\end{tikzcd}
\end{center}
we think of as first ``locally linearizing'' followed by ``globally linearizing'' 
(this is of course somewhat vaguely stated; the particular models we work with are summarized in the top row of diagram \eqref{diagram:enriched-functors}).
This interpretation of strictification as a ``linearization'' is inspired by the tight connection between $\omega$-groupoids and chain complexes: any space with the homotopy type of an $\omega$-groupoid has universal cover a product of Eilenberg-Maclane spaces, equivalently the geometric realization of a chain complex \cite{Ara_2013}. 
It is not known whether this precisely characterizes such spaces, but Ehlers and Porter showed that the homotopy types modeled by $\omega$-groupoids can be characterized as variety within simplicial groupoids by the vanishing of words connected to the Whitehead bracket, the fundamental non-linear data of unstable homotopy theory \cite{EhlersPorter1997}.\footnote{A great deal of work on this has also been done in \cite{Carrasco1991} by Carrasco and Cegarra, who constructed gadgets called ``hypercrossed complexes'' which (roughly) generalize $\omega$-groupoids by equipping them with various dimension-increasing binary operations, which give a sort of non-linear enhancement to an $\omega$-groupoid. 
They show that these have a homotopy theory equivalent to the homotopy theory of $\infty$-groupoids, which characterizes the homotopy types modeled by $\omega$-groupoids as exactly those which can be modeled by a hypercrossed complex in which the binary operations are trivial, i.e. the ``linear'' objects.}
Furthermore, the explicit description of $\lefttwo$ in Appendix \ref{appendix:global-strictification-description} can be summarized as ``quotienting out by all non-linear (i.e. quadratic and higher) terms,'' and the chain complex version $\dgbarconstruction$ of $\righttwo$ recovers in a degenerate case with the functor of indecomposables of an augmented DGA. 
\par
To prove that this composite functor $\lefttwo \circ \leftone$ is conservative, it suffices to prove that each individual functor is conservative.
The first functor $\leftone$ is conservative because strictification of $\infty$-groupoids is conservative, and equivalences of $(\infty,1)$-categories are detected locally.
The second functor $\lefttwo$ is more difficult to understand; the major difficulty is its nonlocal nature: the hom $\omega$-groupoid $\lefttwo(\C)[x,y]$ depends not just on $\C[x,y]$ but on every composition map $\C[x,z] \otimes \C[z,y] \to \C[x,y]$. 
\par Concretely, the adjunction which $\lefttwo$ is the left adjoint of is a change of enrichment adjunction along an adjunction with lax monoidal right adjoint: namely, the identity adjunction $\Id: \stinfty^\otimes \rightleftarrows \stinfty^\times: \Id$ between the Gray monoidal structure and the cartesian monoidal structure on $\stinfty$. 
This adjunction is not strong monoidal, nor even Quillen monoidal, the homotopical analog.
We therefore have few general tools with which to analyze the induced change of enrichment adjunction;
one of the main purposes of the paper \cite{Strong2026} was to prove that this adjunction is Quillen.
Our proof of Theorem \ref{alphathm:infinity-one-conservativity} proceeds by analyzing a similar adjunction involving categories enriched in chain complexes over a varying groupoid of operators, which is constructed in \cite{Strong2026}.
%%% The proof ends up being a sort of multi-object, varying-equivariance version of the fact that the bar construction of augmented, simply connected $\dg$-algebra is conservative.
\par We would like to note that Simon Henry and Felix Loubaton have constructed a different sort of strictification functor for $(\infty,n)$-categories and conjectured their functor is conservative \cite{HenryLoubaton2025}. Their approach contrasts with ours in that they do not use an enriched approach, and they do not produce the strictest possible version of $(\infty,n)$-categories, in which all axioms hold strictly; instead they produce a version in which all axioms are strict except invertibility. 
\par We also note that Chanavat and Hadzihasanovic have recently constructed what they call a ``semi-strictification'' of $(\infty,n)$-categories. Their meaning by ``strictification'' is of a different sort than either our meaning or Loubaton and Henry. Their goal is to find algebraic models for $(\infty,n)$-categories which are equivalent to the fully weak varieties (in particular, for $n=1$ equivalent to $(\infty,1)$-categories) but have some amount of strictness \cite{Chanavat2025}. 
By contrast, our meaning of ``strictification'' is a destructive operation intended to give a somewhat computable algebraic invariant.
\par Finally, we note that Burke in \cite{Burke2021} considers Quillen adjunctions between simplicial monoids, monoids in $\stinfty$ with respect to the Gray tensor product, and monoids in $\stinfty$ with respect to the cartesian product. Since monoids are equivalent to one-object categories, our work reproduces some of the work there.
\subsection*{Conventions}
We will consider a single category with two different closed symmetric monoidal structures; the category of crossed complexes $\crcom$ (Definition \ref{dfn:crossed-complex}) with both its categorical product $\times$ as well as its tensor product $\otimes$ (Definition \ref{dfn:tensor-product-of-crossed-complexes}). 
To differentiate these monoidal categories, we will notate them as $\crcom^\times$ and $\crcom^\otimes$, and use simply $\crcom$ when discussing results that either hold for both monoidal categories or results that are independent of the monoidal structure. 
We shall do similarly with the category $\ch$ of chain complexes over a varying groupoid with its cartesian and tensor monoidal structures (though there there will also be a further distinction between augmented and non-augmented).
\par
For $\V$ any of the symmetric monoidal categories $\{\sset, \crcom, \gpd\}$, we shall denote its unit by $\bullet$. 
For $\V$ a symmetric monoidal category and $\C$ a $\V$-category, we denote by $\C[x,y]$ the corresponding hom-object. The following construction occures frequently in analyzing enriched categories, particularly cellular objects in $\vcat$:
\begin{definition}
	Let $\V$ be a closed symmetric monoidal category. Denote by $\sus: \V \to \vcat$ the functor which takes an object $X \in \V$ to the $\V$-category with object set $\{0,1\}$ and hom-objects given by
	$$
		\sus X[x,y] =
		\begin{cases}
			\bullet & \text{ if } x=y \\
			X & \text{ if } x=0, \; y=1 \\
			\emptyset & \text{ if } x=1, \; y = 0
		\end{cases}	
	$$
	Where $\emptyset$ is the initial object of $\V$.
\end{definition}
In all of the model categories we will consider, we will write composition in the standard order, e.g. if we have morphisms \begin{tikzcd}  X \ar[r, "f"] & Y \ar[r, "g"] & Z \end{tikzcd} we write the composition $gf$. 
However, for other categorical compositions we will generally write composition the opposite way, e.g. if we have morphisms \begin{tikzcd}  p \ar[r, "\ell "] & q \ar[r, "k"] & r \end{tikzcd} of a groupoid, then we will write their composition as $\ell k$. 
Roughly, whenever our morphisms represent composition of functions between sets-with-structure, we write their composition as is typical for functional notation; otherwise we write the reverse order.
\subsection*{Organization}
In Section \ref{section:background} we recall the basic definitions of our main objects of study and the monoidal structures involved. 
In Section \ref{section:model-structures} we recall the model structures on our categories of interest.
In Section \ref{section:Quillen-pairs} we define the relevant Quillen adjunctions. 
In section \ref{section:dgbar-analysis} we prove some lemmas about the functor $\dgbarconstruction$ in order to prepare for the main proof, and in Section \ref{section:conservativity} we give the proof of the main theorem.
Finally, Appendix \ref{appendix:the-monoid-axiom} has a technical lemma (that the monoid axiom holds in $\crcom^\times$), and Appendix \ref{appendix:global-strictification-description} gives a description of the functor $\lefttwo$, which we do not use in proving any of our theorems but nevertheless include for completeness. 
\subsection*{Acknowledgments}
This paper is adapted from the third chapter of my PhD thesis, and I would like to thank my advisor Inna Zakharevich for her guidance and encouragement. 
I would also like to thank Chase Vogeli, Varinderjit Mann, and Simon Henry for helpful discussions around the content.
\section{Background}\label{section:background}
\begin{definition}\label{dfn:omega-cats} 
An \emph{$\omega$-category} $C$ is a diagram of sets\footnote{As is typical, we suppress identities from our notation, which would in this diagram be arrows $C_i \to C_{i+1}$.}
\begin{center}
\begin{tikzcd}[sep = huge]
C_0 & C_1 \ar[l, "s", shift left = 2] \ar[l,"t" swap, shift right = 2] & C_2 \ar[l, "s", shift left = 2] \ar[l,"t" swap, shift right = 2]   & \cdots \ar[l, "s", shift left = 2] \ar[l,"t" swap, shift right = 2] 
\end{tikzcd}
\end{center}
such that each diagram
\begin{center}
\begin{tikzcd}[sep = huge]
C_i & C_{i+k} \ar[l, "s^k", shift left = 2] \ar[l,"t^k" swap, shift right = 2] 
\end{tikzcd}
\end{center}
is equipped with the structure of a category with objects $C_i$ and morphisms $C_{i+k}$, and such that these are compatible in the sense that each diagram
\begin{center}
\begin{tikzcd}[sep = huge]
C_i & C_{i+k} \ar[l, "s^k", shift left = 2] \ar[l,"t^k" swap, shift right = 2]  & C_{i+k+j} \ar[l, "s^j", shift left = 2] \ar[l,"t^j" swap, shift right = 2]  
\end{tikzcd}
\end{center}
is a strict $2$-category. This is equivalent to imposing that  
$$(\alpha \circ_j^k \beta) \circ_i^k(\gamma \circ_j^k \eta) = (\alpha \circ_i^k \beta) \circ_j^k(\gamma \circ_i^k \eta)$$
whenever these are defined, where $\circ_a^b$ denotes the categorical composition operation of 
\begin{center}
\begin{tikzcd}[sep = huge]
C_a & C_b \ar[l, "s^{b-a}", shift left = 2] \ar[l,"t^{b-a}" swap, shift right = 2] 
\end{tikzcd}
\end{center} 
This compatibility axiom is referred to as \emph{interchange}. A map between $\omega$-categories is a map of diagrams which preserves all the categorical structure. The resulting category we notate as $\omegacat$.
\end{definition}

\begin{definition}\label{dfn:omega-n-cats}
An \emph{$(\omega,n)$-category} $C$ is an $\omega$-category such that for all $i \ge n$ and $k > 0$, the categories
\begin{center}
\begin{tikzcd}[sep = huge]
C_i & C_{i+k} \ar[l, "s^k", shift left = 2] \ar[l,"t^k" swap, shift right = 2] 
\end{tikzcd}
\end{center}
are groupoids. The resulting full subcategory of $\omegacat$ we notate as $\omegancat{n}$. As a case of particular interest, we refer to $(\omega,0)$-categories as $\omega$-groupoids, and denote the category by $\stinfty$.
\end{definition}
$\omegancat{n}$ possess small limits, and consequentially we can define enriched categories over it:
\begin{definition}
We denote by $\cartesiancat{n}$ the category whose objects are categories enriched in $\omegancat{n}$ with respect to the cartesian monoidal structure, and morphisms functors between such categories. 
\end{definition}
A key feature of the theory is that enriching in $(\omega,n)$-categories ``raises the categorical level by $1$'':
\begin{theorem}\label{thm:enriched-strict-equivalence}
There is an equivalence of categories
	$$\omegannerve{n}:\cartesiancat{n} \rightleftarrows (\omega,n+1)\text{Cat}: \omeganrigidification{n+1}$$ 
\end{theorem}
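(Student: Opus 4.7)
The plan is to construct the two functors explicitly and show that they are mutually inverse (either strictly or up to a natural isomorphism that is evident from the construction). First I would define the nerve $\omegannerve{n}(D)$ for a cartesian $(\omega,n)$-enriched category $D$ by setting $\omegannerve{n}(D)_0 = \ob(D)$ and, for $k \ge 1$, taking $\omegannerve{n}(D)_k$ to be the disjoint union over pairs $(x,y) \in \ob(D)^2$ of the set of $(k-1)$-cells of the $(\omega,n)$-category $D[x,y]$. The source and target maps into dimension $0$ record the endpoints $x,y$; the source and target maps between positive dimensions are inherited from the $\omega$-category structure on each $D[x,y]$. Identities at $0$-cells come from the unit maps $\bullet \to D[x,x]$, identities at higher cells come from the ambient hom $(\omega,n)$-categories, the composition $\circ_0^k$ is furnished by the enriched composition $D[x,y] \times D[y,z] \to D[x,z]$, and compositions $\circ_i^k$ for $i \ge 1$ are the compositions internal to each hom.

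In the reverse direction, given an $(\omega,n+1)$-category $C$ I would define $\omeganrigidification{n+1}(C)$ to have object set $C_0$ and hom $\omeganrigidification{n+1}(C)[x,y]$ whose $(k-1)$-cells are the $k$-cells $\alpha \in C_k$ with $s^k(\alpha) = x$ and $t^k(\alpha) = y$, carrying the compositions $\circ_i^k$ for $i \ge 1$ restricted from $C$; the invertibility in $C$ above dimension $n+1$ forces invertibility of the hom above dimension $n$, so we land in $\omegancat{n}$. Enriched composition is the restriction of $\circ_0$ in $C$, and unit maps $\bullet \to \omeganrigidification{n+1}(C)[x,x]$ pick out the identity $1$-cells at each object. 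Both constructions are evidently functorial in morphisms.

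The main obstacle, and really the whole content, is checking that each construction lands in the correct target and that composition is genuinely enriched: for $\omegannerve{n}$, one has to verify the full interchange law $(\alpha \circ_i^k \beta) \circ_j^k (\gamma \circ_i^k \eta) = (\alpha \circ_j^k \beta) \circ_i^k (\gamma \circ_j^k \eta)$ whenever either $i=0$ or $j=0$. The case where both $i,j \ge 1$ holds inside each hom by hypothesis, while the mixed case reduces to the statement that the composition map $D[x,y] \times D[y,z] \to D[x,z]$ is a functor of $(\omega,n)$-categories; because the enrichment is with respect to the \emph{cartesian} product, this is exactly the data of a bifunctor and the interchange follows automatically. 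For $\omeganrigidification{n+1}$, dually, one uses the interchange between $\circ_0$ and the higher $\circ_i$ in $C$ to see that the inherited composition map is a morphism of $(\omega,n)$-categories. This cartesian-versus-Gray distinction is precisely what makes the theorem about $(\omega,n+1)$-categories, rather than some weaker enriched variant, and is why the analogous statement with the Gray tensor product (which will appear later in the paper) is a genuinely different, subtler object.

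Finally, I would verify the equivalence by inspection: the two constructions simply reshuffle the same underlying data (cells grouped into homs, or homs assembled back into a globular set), and every piece of algebraic structure is transported verbatim. Thus one obtains strict identities $\omeganrigidification{n+1} \circ \omegannerve{n} = \Id$ and $\omegannerve{n} \circ \omeganrigidification{n+1} = \Id$, giving the claimed equivalence of categories.
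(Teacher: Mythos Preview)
Your proposal is correct and follows essentially the same approach as the paper: the paper sketches only the functor $\omegannerve{n}$ (using exactly your definition of cells as shifted hom-cells and observing that interchange with $\circ_0$ comes from the composition maps being $(\omega,n)$-functors), noting that the result is classical. You go further by also spelling out $\omeganrigidification{n+1}$ and checking the round-trip identities, but the content and key observation (cartesian enrichment is what makes interchange work) are the same.
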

\begin{proof}
As the result is classical, we sketch just the functor $\omegannerve{n}$ for the point of illustration: for $\C \in \cartesiancat{n}$, we define $\omegannerve{n}(\C)$ by:
$$\omegannerve{n}(\C)_0 = \ob(\C) \quad \text{ and } \quad \omegannerve{n}(\C)_k= \coprod_{x,y \in \ob(\C)} \C[x,y]_{k-1}\text{ for } k > 0$$ 
The source and target maps are given ``locally'' (i.e. by the source and target maps of $\C[x,y]$) for $k > 1$. For $k = 1$ we define $s(\alpha) = x$ where $x$ is the unique object of $\C$ with $\alpha \in \C[x,y]_0$; we define $t$ for $k=1$ similarly. 
The composition operations $\circ_i^k$ are given ``locally'' if $i > 0$; i.e. if $i > 0$ then for $\alpha \circ_i^k \beta$ to be a valid composition in $\omegannerve{n}(\C)$ we must have that $\alpha, \beta \in \C[x,y]$ for some objects $x$ and $y$ and so we can define 
$$\alpha \circ_i^k \beta = \alpha \tilde{\circ}_{i-1}^{k-1} \beta$$
where $\tilde{\circ}$ denotes the composition operation in $\C[x,y]$. 
It is not hard to see that interchange is satisfied for $i < j < k$ when $i > 0$; when $i > 0$ it follows from the composition maps in $C$ being maps of $(\omega,n)$-categories. 
The fact that the appropriate category structures are groupoidal (so that $\omegannerve{n}(\C)$ is not just an $\omega$-category, but an $(\omega,n+1)$-category) follows directly from the fact that the corresponding category structures in $C$ (but shifted down one indexing level) are groupoidal.
\end{proof}
For the results of this paper, we will only need to know that $(\omega,1)\text{Cat}$ is equivalent to $\cartesiancat{0}$. 
Furthermore, we will not actually work directly with $\omega$-groupoids, we will instead work with crossed complexes (Definition \ref{dfn:crossed-complex}). 
\subsection{Crossed Complexes}
\begin{definition}[\cite{Brown_Higgins_Sivera_2011}, Definition 7.1.9]\label{dfn:crossed-complex}
	A \textbf{crossed complex} $C$ is a sequence of sets 
	\begin{center}
	\begin{tikzcd}
		C_0 
		& C_1 
			\ar[l, "s", shift right = 2,swap] 
			\ar[l, "t", shift left = 2] 
		& C_2 
			\ar[l, "\delta_2"] 
		&  C_3 
			\ar[l,"\delta_3"] 
		& \cdots 
			\ar[l]
	\end{tikzcd}
	\end{center}
	Such that:
	\begin{enumerate}
		\item The diagram
		\begin{center}
		\begin{tikzcd}
		C_0 
		& C_1 
			\ar[l, "s", shift right = 2, swap] 
			\ar[l, "t", shift left = 2]
		\end{tikzcd}
		\end{center}
		forms a groupoid, 
		which we will abuse notation by referring to simply as $C_1$.
		\item Each $C_i$ for $i \ge 2$ is a skeletal module 
		over the groupoid $C_1$: that is, a family of groups of the form 
		$$C_i = \coprod_{c \in C_0} C_i(c)$$
		where each $C_i(c)$ is a group, equipped with morphisms
		$$
		\phi_\ell: C_i(s(\ell)) \to C_i(t(\ell))
		$$
		for each $\ell \in C_1$, 
		satisfying that for composable $\ell$ and $p$ in $C_1$, 
		$$
		\phi_{\ell \circ p} = \phi_\ell \circ \phi_p
		$$
		and that $\phi_{\id_x} = \id_{C_i(x)}$. 
		Further, each $C_i(c)$ is abelian for $i>2$. 
		From now on we shall generally suppress the 
		$ c \in C_0$ from our notation when our meaning is clear, 
		saying for example ``$C_i$ is abelian for $i > 2$.''
		\item For $i > 2$, the maps $\delta_i$ are families of maps of groups 
		$\delta_i : C_i \to C_{i-1}$, 
		satisfying $\delta_{i-1} \circ \delta_i = 0$.
		\item $\delta_2$ is a family of maps of groups 
		$\delta_2(c) : C_2(c) \to \Aut(c)$, 
		where by $\Aut(c)$ we mean the automorphism group of $c$ in the groupoid $C_1$.
		\item The action of $C_1$ on $C_i$ is compatible with the 
		$\delta_i$ in the sense that for $i > 2$, 
		$\ell \in C_1$, and $a \in C_i(x)$
		$$\phi_\ell \circ \delta_i = \delta_{i-1} \circ \phi_\ell$$
		\item For any $a \in C_2$, $\delta_2(a)$ acts 
		by conjugation by $a$ on $C_2$ and trivially on $C_i$ for $i > 2$.
	\end{enumerate}
\end{definition}
To a crossed complex $C$, we can associate homology groups:
\begin{definition}
	For $C \in \crcom$ and $c \in C_0$, 
	define $\pi_0$, $\pi_1$, and $H_n$ for $n \ge 2$ by the following:
	\begin{itemize}
		\item $\pi_0(C)$ is $\pi_0$ of the groupoid $C_1$. 
		\item $\pi_1(C,c)$ is $\text{Aut}(C_1,c)/\delta_2(C_2(c))$
		\item For $n \ge 2$, $H_n(C,c)$ is $\text{ker}(\delta_n)(c)/\text{im}(\delta_{n+1})(c)$.
	\end{itemize}	 
	Note that $H_2$ is always abelian although $C_2$ may not be, 
	as the condition that $\delta(C_2)$ acts on itself by conjugation 
	means that $\text{ker}(\delta_2)$ commutes with everything in $C_2$.
\end{definition}
Our interest in crossed complexes comes from the following theorem:
\begin{theorem}[\cite{Brown_1981}]
There is an equivalence of categories
$$ \puttogether : \crcom \rightleftarrows \stinfty : \takeapart$$
\end{theorem}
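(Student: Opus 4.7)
The plan is to construct the two functors explicitly and then check that they are mutually inverse. The forward functor $\takeapart$ is the conceptually clearer of the two, so I would start there. Given $G \in \stinfty$, set $\takeapart(G)_0 := G_0$ and take $\takeapart(G)_1$ to be the underlying $1$-groupoid $(G_0 \leftleftarrows G_1)$ with composition $\circ_0^1$. For $i \ge 2$ and $x \in G_0$, define
$$\takeapart(G)_i(x) := \{a \in G_i : s^k(a) = t^k(a) = \id_x^{(k)} \text{ for all } 1 \le k \le i-1\},$$
the set of $i$-cells whose source and target iterates down to dimension $1$ are the iterated identity on $x$, so that only the top-dimensional target is potentially nondegenerate. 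The group operation on $\takeapart(G)_i(x)$ comes from any of the compositions $\circ_j^i$ for $0 \le j < i$; these all agree on such ``based'' cells by interchange, and by an Eckmann--Hilton argument (applied using two distinct compositions) the resulting group is abelian for $i \ge 3$. The boundary $\delta_i$ sends $a$ to its top target $t(a)$, which is automatically based. The action of $C_1$ on the higher groups is by whiskering on the left and right with a $1$-cell and its inverse. Verifying axioms (1)--(6) of Definition \ref{dfn:crossed-complex} then amounts to tracking routine interactions of source, target, identities, and the various compositions.

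The reverse functor $\puttogether$ is harder, and this is where I would expect the main difficulty to lie. The rough idea is to produce the $k$-cells of $\puttogether(C)$ as equivalence classes of formal pasting composites of cells drawn from $C$, modulo the $\omega$-category axioms. To make this explicit, one decomposes each prospective $k$-cell uniquely into a ``based'' part coming from $C_k$ together with inductively built lower-dimensional faces, using the $C_1$-action to record the whiskerings that glue these pieces to the correct source and target. The hard part is verifying that the compositions on such pasting diagrams satisfy interchange and that the axioms for a crossed complex are precisely what is needed to output a valid $\omega$-groupoid — in particular, that item (6) (the conjugation axiom) is what ensures the $2$-dimensional composition is well-defined on equivalence classes.

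Finally, I would check that $\takeapart \circ \puttogether \cong \id_{\crcom}$ and $\puttogether \circ \takeapart \cong \id_{\stinfty}$. The first is essentially tautological: the based cells of $\puttogether(C)$ in dimension $k$ are by construction the elements of $C_k$, and all the structure maps match. The second isomorphism requires the key structural fact that every $k$-cell of an $\omega$-groupoid admits a canonical decomposition into based cells, recovering it as the pasting of its iterated source/target data with a single top-dimensional ``based'' correction; this normal-form result, proved by induction on dimension using interchange and the groupoid structure, is the technical core of Brown and Higgins's original argument. Since this equivalence is classical and we invoke it purely as a convenient model, I would cite the detailed verification to \cite{Brown_1981}.
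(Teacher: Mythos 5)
The paper does not actually prove this statement: it is quoted as a classical result of Brown and Higgins with a citation to \cite{Brown_1981}, followed only by an informal gloss (based cells obtained by whiskering with inverses of iterated sources, group structure from composition, Eckmann--Hilton for abelianness in degrees $\ge 3$, and $\puttogether$ as the recursive reassembly by whiskering). Your outline follows exactly that gloss and the classical Brown--Higgins argument, and like the paper you ultimately defer the verification to the same reference, so in substance you and the paper agree. One slip worth fixing: as literally written, your definition of $\takeapart(G)_i(x)$ demands $t^k(a)$ be an iterated identity for \emph{all} $1 \le k \le i-1$, which at $k=1$ forces $t(a)$ to be degenerate and would make every boundary map $\delta_i$ trivial; the correct (and clearly intended, given your next sentence) condition normalizes only the iterated \emph{sources} $s^k(a)$, whereupon globularity automatically makes $t(a)$ a based $(i-1)$-cell and $\delta_i(a) := t(a)$ is a genuine, generally nontrivial boundary.
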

One should think of a crossed complex as a gadget which encodes a sort of ``basis'' for the cells of the associated $\omega$Gpd; the groups $C_n(x)$ are roughly to be thought of as ``unbased $n$-cells'' in the following sense: let $G \in \stinfty$. For any $n$-cell $\alpha \in G_n$, one can whisker (compose with identity cells) $\alpha$ with the lower dimensional cells $s(\alpha)^{-1}, s^2(\alpha)^{-1},...,s^{n-1}(\alpha)^{-1}$ to obtain an $n$-cell $\tilde{\alpha}$ such that $s(\tilde{\alpha})$ is equal to $\id^{n-1}(p)$ for $p$ the $0$-cell $s^n(\alpha)$. 
For $n \ge 2$, the $n$-cells of this form are a group under the composition operation in 
\begin{center}
\begin{tikzcd}[sep = huge]
C_{n-k} & C_{n} \ar[l, "s^k", shift left = 2] \ar[l,"t^k" swap, shift right = 2] 
\end{tikzcd}
\end{center}
for $ 2 \le k \le n$. Furthermore, by the Eckmann-Hilton argument these groups are abelian if $n \ge 3$.
We think of these $n$-cells $\tilde{\alpha}$ as being the ``unbased $n$-cell data'' of the (based) $n$-cells $\alpha$; given $s(\alpha)$ and $\tilde{\alpha}$ we can recover $\alpha$ via whiskering.
%moreover for any two composable $\alpha$ and $\beta$ in $G_n$ we can recover the compositions $\alpha \circ_{i}^n \beta$ via $\tilde{\alpha} + \tilde{\beta}$ and $s(\alpha)$. 
The effect of the functor $\takeapart$ is remember only the cells $\tilde{\alpha}$ and assemble them into a crossed complex, and the effect of $\puttogether$ is to take the cells $\tilde{\alpha}$ and recursively piece them together via whiskering to recover the original $n$-cells. This is analagous to the more well-known Dold-Kan correspondence, which describes how the $n$th term of a chain complex can be thought of as ``unbased $n$-simplex data'' for the corresponding simplicial abelian group, and how these can be recursively pieced together to recover the entire simplicial abelian group.
\\
\indent The reason that we will work with $\crcom$ rather than directly with $\omega \gpd$ is that its resemblance to chain complexes makes it often easier to work with directly than $\stinfty$; in particular in understanding the tensor product. 
We have by the equivalence $ \puttogether : \crcom \rightleftarrows \stinfty : \takeapart$ an equivalence of categories $\omegancat{1} \cong \cartcrossedcat $, and for the remainder of the paper we will use the later as our model of choice for $(\omega,1)$-categories. 
This model will help us elucidate the ``linear'' nature of strict higher categories.
\subsection{Monoidal structures on Crossed Complexes} 
The category of crossed complexes has two monoidal structures that are of interest to us in this paper: the first is the cartesian monoidal structure.
\begin{theorem}
Let $C,D \in \crcom$. Their cartesian product $C \times D$ is the crossed complex
\begin{center}
\begin{tikzcd}
C_0 \times D_0 & C_1 \times D_1 \ar[l, "s", shift right = 2, swap] \ar[l, "t", shift left = 2] & C_2 \times D_2 \ar[l, "\delta_2"] &  C_3 \times D_3 \ar[l,"\delta_3"] & \cdots \ar[l]
\end{tikzcd}
\end{center}
where $C_0 \times D_0$ is the product of sets and $C_i \times D_i$ is the (morphism set of the) cartesian product of groupoids. The boundary maps and the action of $C_1 \times D_1$ are given componentwise, and the natural maps $C \times D \to C$ and $C \times D \to D$ are given by levelwise projections. 
\end{theorem}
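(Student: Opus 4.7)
The plan is to verify three things: (i) the proposed diagram together with the given operations satisfies the axioms of Definition~\ref{dfn:crossed-complex}; (ii) the levelwise projections are morphisms of crossed complexes; and (iii) the pair of projections satisfies the universal property of a product in $\crcom$. Since every structure map is defined levelwise, each check essentially reduces to the corresponding check in the separate factors $C$ and $D$.

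For step (i), I would proceed axiom by axiom. The groupoid structure on $C_1 \times D_1$ with object set $C_0 \times D_0$ is just the usual cartesian product of groupoids in $\grpd$. For $i \ge 2$, each fiber $(C_i \times D_i)(c,d) = C_i(c) \times D_i(d)$ inherits the direct product group structure, which is abelian whenever both factors are, hence for $i \ge 3$. The action is defined componentwise by $\phi_{(\ell,m)}(a,b) = (\phi_\ell a, \phi_m b)$, so the functoriality conditions $\phi_{(\ell,m)(\ell',m')} = \phi_{(\ell,m)} \circ \phi_{(\ell',m')}$ and $\phi_{\id} = \id$ reduce immediately to the corresponding conditions in $C$ and $D$. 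Setting $\delta_i(a,b) = (\delta_i^C a, \delta_i^D b)$, the chain condition $\delta_{i-1}\delta_i = 0$, the equality $s \circ \delta_2 = t \circ \delta_2$, the equivariance $\phi_\ell \circ \delta_i = \delta_{i-1} \circ \phi_\ell$, and the final axiom relating the image of $\delta_2$ to conjugation on $(C\times D)_2$ and to trivial action on $(C\times D)_i$ for $i > 2$ all decompose componentwise.

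For steps (ii) and (iii), the projection $C \times D \to C$ sends $(a,b) \mapsto a$ at every level, and since every operation on $C \times D$ is defined componentwise, this is tautologically a morphism of crossed complexes; similarly for the projection to $D$. Given morphisms $F: E \to C$ and $G: E \to D$, the map $\langle F, G\rangle : E \to C \times D$ defined by $\langle F, G\rangle(e) = (F(e), G(e))$ on each level is the unique levelwise function compatible with the projections, and each morphism axiom for $\langle F, G\rangle$ is the conjunction of the corresponding axioms for $F$ and $G$. The only ``obstacle'' is bookkeeping: the list of axioms in Definition~\ref{dfn:crossed-complex} is long, so one must patiently confirm that every one is strictly componentwise. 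No conceptual difficulty arises, which is why in the paper itself this verification can reasonably be left to the reader.
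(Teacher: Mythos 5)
Your proposal is correct and matches the paper, which simply states ``The proof is routine'' and leaves exactly this componentwise verification of the axioms, the projection morphisms, and the universal property to the reader. Your axiom-by-axiom outline fills in that routine check in the expected way, with no divergence in approach.
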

The proof is routine. Note that in particular the fact that the action of $C_1 \times D_1$ is given componentwise means that elements in $C_1 \times D_1$ of the form $(c_1, \id_y)$ for $c_1$ an automorphism act trivially on elements of the form $(\id_x, d_n) \in C_n \times D_n$. 
\par The second monoidal structure on $\crcom$ that we are interested in we shall simply call the \textit{tensor product} and notate by $\otimes$. This product corresponds to the Gray tensor product of $\omega$-groupoids along the equivalence of categories $\stinfty \cong \crcom$ (see \cite{Brown_Higgins_Sivera_2011} \cite{Brown2002}). 
Several equivalent definitions are given in the textbook by Brown, Higgins, and Sivera \cite{Brown_Higgins_Sivera_2011}; we give here just the relatively concrete generators-and-relations type definition:
\begin{definition}\label{dfn:tensor-product-of-crossed-complexes}
For $C,D \in \crcom$, their tensor product $C \otimes D$ is presented as follows:
it has generators given by $c_m \otimes d_n \in (C \otimes D)_{m+n}$ for $c_m \in C_m$	and $d_n \in D_n$. The source maps (and target maps, when $m + n = 1$) are given componentwise. 
The relations are given as follows:
$$
(c_m \otimes d_n)^{a_i \otimes b_j} = 
\begin{cases}
c_m^{a_i} \otimes d_n & \text{if }m \ge 2\text{ and } b_j = sd_n \\
c_m \otimes d_n^{b_j} & \text{if } n \ge 2 \text{ and } a_i = sc_m
\end{cases}
$$
$$
(c_mc'_m \otimes d_n) = 
\begin{cases}
c_m \otimes d_n \cdot c'_m\otimes d_n & \text{if }m = 0 \text{ or } n \ge 2 \\
c'_m \otimes d_n \cdot (c_m \otimes d_n)^{c'_m \otimes sd_n} & \text{if } n = 1 \text{ and } m \ge 1
\end{cases}
$$
Symmetrically, we have
$$
(c_m \otimes d_nd'_n) = 
\begin{cases}
c_m \otimes d_n \cdot c_m\otimes d'_n & \text{if } n = 0 \text{ or } m \ge 2 \\
(c_m \otimes d_n)^{sc_m \otimes d_n'} \cdot (c_m \otimes d'_n) & \text{if } n = 1 \text{ and } m \ge 1
\end{cases}
$$
The boundary maps are given by 
$$\del (c_m \otimes d_n) = \del(c_m) \otimes d_n \cdot c_m \otimes \del(d_n)$$
Except if $m$ or $n$ is $1$. If $m = 1$, replace the first term with
$$(sc_1 \otimes d_n)^{-1} \otimes (tc_1 \otimes d_n)^{c_1 \otimes sd_n}$$
and similarly for the case $n = 1$.
If $m = n = 1$, then in particular 
$$\del(c_1 \otimes d_1) = (sc_1 \otimes d_1)^{-1} \cdot (c_1 \otimes td_1)^{-1} \cdot (tc_1 \otimes d_1) \cdot (c_1 \otimes sd_1) $$
\end{definition}
The unit for $\otimes$ is given by the ``one-point crossed complex'' $\bullet$ in which the underlying groupoid is the groupoid with one object and no nonidentity morpisms, and all the higher groups are trivial. 
Since this is also the terminal object, for crossed complexes $C$ and $D$ we have maps $C \otimes D \to C \otimes \bullet \cong C$ and $C \otimes D \to \bullet \otimes D \cong D$; 
these induce a map $C \otimes D \to C \times D$.
\begin{example}\label{example:tensor-v-product-intervals}
let $\mathbb{D}^1$ be the crossed complex with $C_1$ the contractible groupoid \begin{tikzcd} 0 \ar[r, "\ell"] & 1\end{tikzcd} and all higher groupoids trivial, and consider the tensor product $\mathbb{D}^1 \otimes \mathbb{D}^1$. By the description above, the underlying groupoid of $\mathbb{D}^1 \otimes \mathbb{D}^1$ has four generators, which form a \textit{non-commuting} square
\begin{center}
\begin{tikzcd}
0\otimes 0  \ar[r, "\ell \otimes 0"] \ar[d, "0 \otimes \ell"]&  1 \otimes 0 \ar[d, "1 \otimes \ell"] \\
0 \otimes 1 \ar[r, "\ell \otimes 1"] & 1 \otimes 1
\end{tikzcd}
\end{center}
There is one generator in dimension 2, the tensor $\ell \otimes \ell$, and the boundary is exactly the square above (that is, the composition of the morphisms in the square given by starting at $0 \otimes 0$ and then going once counterclockwise around the square). 
We think of this as a ``weakly commuting'' square, with $\ell \otimes \ell$ witnessing the weak commutativity. 
By contrast, the crossed complex $\mathbb{D}^1 \times \mathbb{D}^1$ is the ``strictly commuting'' square, whose underlying groupoid is the \textit{commuting} square
\begin{center}
\begin{tikzcd}[sep = large]
{(0{,} 0)}  \ar[r, "(\ell{,} {\id_0})"] \ar[d, "({\id_0}{,}\ell)"] 
%\ar[rd, "({\ell}{,} \ell)"] 
	&  {(1{,}0)} \ar[d, "({\id_1}{,} \ell)"] \\
{(0{,} 1)} \ar[r, "(\ell{,} {\id_1})"] 
	& {(1{,} 1)}
\end{tikzcd}
\end{center}
The natural map $\mathbb{D}^1 \otimes \mathbb{D}^1 \to \mathbb{D}^1 \times \mathbb{D}^1$ is given by collapsing the generating $2$-cell $\ell \otimes \ell$.
\end{example}
This example demonstrates the general behaviour of the map $C \otimes D \to C \times D$, which can be described as collapsing all of the tensors $c_m \otimes d_n$ where both $m$ and $n$ are nonzero. 
It sends a tensor $c_m \otimes d_0$ to the element $(c_m, e)$ where $e$ is the identity element of $D_m(s(d_n))$, and similarly for tensors $c_0 \otimes d_n$. 
This description is fairly immediate from the definition of the map. In Appendix \ref{appendix:global-strictification-description} we give some more description of the behavior of this map.
\subsection{The Model Structure on $\crcom$}
The category $\crcom$ has a model structure originally due to Brown and Golasinski \cite{BrownGolasinski1989}, shown to be equivalent to the folk model structure on $(\omega,0)$-categories by Ara and M\'etayer \cite{Ara_Metayer_11}.  
This model structure is cofibrantly generated, and we will need an understanding of its generating cofibrations and generating acyclic cofibrations to analyze model structures on $\tensorcrossedcat$ and $\cartcrossedcat$.
\begin{definition}
Denote by $\mathbb{G}_n$ the crossed complex which is the image of an $n$-cell under the equivalence of categories $\takeapart: \stinfty \to \crcom$. So, $\mathbb{G}_n$ has two generators in dimensions $0$ through $n-1$, and one generator in dimension $n$. 
\end{definition}
Equivalently, this is the crossed complex obtained as the fundamental crossed complex of the simplicial disk $D^n$ with the hemispherical simplicial structure, which has two nondengerate simplices in dimensions $0$ through $n-1$ and one nondegenerate simplex in dimension $n$. A description of the fundamental crossed complex functor can be found in \cite{Tonks_2003}.
\begin{definition}
For $n \ge 0 $, denote by $\mathbb{S}^{n-1}$ the crossed complex which has a single generator in dimension $1$ and no other generators, and by $\mathbb{D}^n$ the crossed complex which has one generator in dimension $n$ and one generator in dimension $n-1$, and no other generators. For $n = 0$ we declare $\mathbb{S}^{n-1}$ to be empty and $\mathbb{D}^n$ to be a point.
\end{definition}
There is an inclusion $\mathbb{S}^{n-1} \hookrightarrow \mathbb{D}^n$, which adjoins the $n-1$-dimensional generator of $\mathbb{D}^n$. 
For $n = 0$ this is the inclusion of the empty crossed complex into the point, for $n=1$ this is the inclusion of two points into a contractible groupoid on two points, for $n \ge 2$ this is essentially the same as the inclusion of a chain complex on one generator $a$ in dimension $n-1$ into the chain complex with two generators: $a$ in dimension $n-1$ and $b$ in dimension $n$, with $\del b = a$. 
\par From these we get a slightly different description of the model structure on $\crcom$ than either the description given by Brown and Golasinski or the description as the transferred model structure.
We imagine that this description is already known, but we prove it here as we lack a reference.
\begin{theorem}\label{thm:convenient-description-of-model-structure-on-crcom}
There is a cofibrantly generated model structure on $\crcom$ with set of generating cofibrations $I$ the inclusions $i_n: \mathbb{S}^{n-1} \to \mathbb{D}^n$ for $n \ge 0$ and set of generating trivial cofibrations $J$ the inclusions $j_n: \bullet \to \mathbb{D}^n$ for $n \ge 1$. 
\end{theorem}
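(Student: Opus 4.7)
The plan is to apply Kan's recognition theorem for cofibrantly generated model structures, using as the class of weak equivalences the one already established by Brown--Golasinski; the resulting model structure will then coincide with theirs. This reduces the claim to checking: (i) the sources of $I$ and $J$ are small; (ii) every map in $J$ is a weak equivalence; (iii) every relative $J$-cell complex is a weak equivalence; and (iv) a map has the right lifting property with respect to $I$ iff it has the right lifting property with respect to $J$ and is a weak equivalence.

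The smallness in (i) is immediate, as $\bullet$, $\mathbb{S}^{n-1}$, and $\mathbb{D}^n$ each have only finitely many generators in finitely many dimensions, hence are finitely presented and so small relative to all transfinite compositions. For (ii), each $\mathbb{D}^n$ is contractible as a crossed complex --- it deformation-retracts to its apex $0$-cell via an explicit nullhomotopy --- so $j_n : \bullet \to \mathbb{D}^n$ is a weak equivalence. For (iii), pushouts along $j_n$ are ``elementary expansions'' in Whitehead's sense and hence strong deformation retracts, so transfinite compositions of such pushouts remain weak equivalences; this is the crossed-complex analog of the standard Kan-complex argument.

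The heart of the argument is (iv). I would unpack the two lifting properties. A map $f : C \to D$ lies in $J\text{-inj}$ iff for every $x \in C_0$ and every $n$-cell $d \in D_n$ based at $f(x)$, there is a lift $c \in C_n$ based at $x$; using whiskering as discussed after Theorem~\ref{thm:crossed-omega-equivalence}, this is equivalent to the Brown--Golasinski condition of fiberwise surjectivity of $f_n$ for $n \geq 2$ and of $f_1 : C_1 \to D_1$ being a fibration of groupoids. Similarly $f$ lies in $I\text{-inj}$ iff every $(n-1)$-cell in $C$ whose image in $D$ is a boundary itself bounds an $n$-cell in $C$ mapping to the designated bounding cell. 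Working through the cases $n = 0$ (surjectivity on $0$-cells), $n = 1$ (full surjectivity on $1$-cells), and $n \geq 2$ (lifting of $n$-cells with specified boundary) shows that $I\text{-inj}$ coincides with $J\text{-inj}$ intersected with the Brown--Golasinski weak equivalences.

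The main obstacle is the low-dimensional bookkeeping in (iv): the groupoid $C_1$, the crossed module $C_2 \to C_1$, and the modules $C_n$ for $n \geq 3$ each require separate analysis, and in every case one must use the $C_1$-action to transport basepointed lifting statements to arbitrarily based ones, and then run a dimension-by-dimension induction to match lifting against homotopy-group data. Once (i)--(iv) are checked, Kan's recognition theorem produces the desired cofibrantly generated model structure.
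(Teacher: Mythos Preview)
Your approach is correct but takes a different, more laborious route than the paper. You rebuild the model structure from scratch via Kan's recognition theorem, checking smallness, acyclicity of $J$-cells, and the identification $I\text{-inj} = W \cap J\text{-inj}$ by a case-by-case analysis in each dimension. The paper instead leverages the fact that the model structure is already known to exist with $I$ as generating cofibrations and with a different set of generating acyclic cofibrations $\tilde{J} = \{\tilde{j}_n : \mathbb{G}_{n-1} \hookrightarrow \mathbb{G}_n\}$ (the globular inclusions, coming from the folk model structure on $\stinfty$). It then suffices to show that $J$ and $\tilde{J}$ generate the same saturated class, which the paper does with a single observation: each $\tilde{j}_n$ is a pushout of $j_n : \bullet \to \mathbb{D}^n$ along the map picking out the source $(n-1)$-cell of $\mathbb{G}_{n-1}$. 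Since each $j_n$ is visibly an acyclic cofibration in the existing structure, this gives both containments at once.

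What your approach buys is self-containment: you do not need to cite the globular description of the folk model structure from \cite{polygraphsbook}. What the paper's approach buys is brevity: the pushout argument is a two-line replacement for your dimension-by-dimension verification of (iv), avoiding the groupoid/crossed-module/module case split entirely. One small omission in your outline: in Kan's recognition theorem you also need $J\text{-cell} \subset I\text{-cof}$, not just $J\text{-cell} \subset W$; this is trivial here (each $j_n$ is an $I$-cellular map), but worth stating.
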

\begin{proof}
The description of the generating cofibrations is exactly the one given in the original construction \cite{BrownGolasinski1989}, and is also the description of the generating cofibrations transferred along the equivalence of categories $\crcom \cong \stinfty$.
To prove that $J$ gives a set of generating trivial cofibrations, we must prove that each $j_n$ is an acylic cofibration and that they generate a set of generating acylic cofibrations of $\stinfty$ under the equivalence of categories $\crcom \cong \stinfty$. 
The first part is clear from the definitions, so it only remains to show generation.
\par Let $\tilde{J}$ be the set $\{\tilde{j}_n\}_{n \ge 1}$, where $\tilde{j}_n$ is the inclusion $\mathbb{G}_{n-1} \to \mathbb{G}_n$. Then $\tilde{J}$ is a set of generating acylic cofibrations of $\crcom$ (this description of the generating acylic cofibrations is given in \cite{polygraphsbook} section 21.3).
We must show that $J$ generates $\tilde{J}$. 
We do this by noting that the map $\tilde{j}_n: \mathbb{G}_{n-1} \to \mathbb{G}_n$ is itself given as the pushout 
\begin{center}
\begin{tikzcd}
	\bullet \ar[d]  \ar[r, "0"]
		& \mathbb{G}_{n-1} \ar[d, "\tilde{j}_n"] \\
	\mathbb{D}^{n} \ar[r, "f"] 
		& \mathbb{G}_n
\end{tikzcd}
\end{center}
Where the map $f$ sends the top dimensional generator to the top dimensional generator, and thus the $n-1$-dimensional generator to the difference of the two $(n-1)$-dimensional generators of $\mathbb{G}_{n}$. (This is essentially the same argument that a free group on two generators $a$ and $b$ is the same as the free group on $a$ and $ab^{-1}$.) 
\end{proof}
We recall here a classical Quillen adjunction which we will use to induce a Quillen adjunction between model categories of enriched categories.
\begin{theorem}[\cite{Brown_Higgins_1991}]\label{thm:fundamental-crossed-complex-conservative}
There is a conservative (weak-equivalence reflecting) Quillen adjunction 
$$\st_0 : \sset \rightleftarrows \crcom: U_0$$
For a simplicial set $X$ we have $\left(\st_0(X)\right)_0 = X_0$, and for any point $x \in X_0$ we have natural isomorphisms
$$\pi_0(\st_0(X)) \cong \pi_0(X) \quad \pi_1(\st_0(X), x) \cong \pi_1(X,x) \quad \pi_n(\st_0(X),x) \cong \pi_n(\widehat{X}_{x}, x) \text{ for }n> 1$$
Where $\widehat{X}_x$ is a universal cover of $X$ based at $x$. For a crossed complex $C$ we have that $\left(U_0(C)\right)_0 = C_0$, and for any point $c\in C_0$ we natural isomorphisms
$$ \pi_0(U_0(C)) \cong \pi_0(C) \quad \pi_n(U_0(C), c) \cong \pi_n(C,c) \text{ for }n > 0 $$
\end{theorem}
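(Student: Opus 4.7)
The plan is to construct $\st_0$ as the left Kan extension along the functor $\Delta \to \crcom$ that sends $[n]$ to the fundamental crossed complex of the standard $n$-simplex (the crossed complex generated by one nondegenerate cell in each dimension $0 \le k \le n$, with boundary maps reproducing the simplicial face structure). Since $\st_0$ is determined by its values on representables and is cocontinuous, the description $\st_0(X)_0 = X_0$ follows immediately. The right adjoint $U_0$ is then automatic via $U_0(C)_n = \Hom_{\crcom}(\st_0(\Delta^n), C)$, and $U_0(C)_0 = C_0$ falls out because the fundamental crossed complex of $\Delta^0$ is the unit $\bullet$.

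For the Quillen property, I would invoke Theorem \ref{thm:convenient-description-of-model-structure-on-crcom} together with the standard generating (acyclic) cofibrations of the Quillen model structure on $\sset$. One checks that each generating cofibration $\partial \Delta^n \hookrightarrow \Delta^n$ is sent by $\st_0$ to a pushout of the generator $i_n: \mathbb{S}^{n-1} \to \mathbb{D}^n$, and similarly that each horn inclusion $\Lambda^n_k \hookrightarrow \Delta^n$ goes to a relative $J$-cell complex (which can in turn be verified by writing the horn inclusion as a composite of pushouts of the $j_n : \bullet \to \mathbb{D}^n$, or equivalently by directly checking that $\st_0(\Delta^n) \to \st_0(\Lambda^n_k)$ is a trivial fibration of crossed complexes via the contractibility of both).

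The main obstacle is the identification $\pi_n(\st_0(X), x) \cong \pi_n(\widehat{X}_x, x)$ for $n \ge 2$, which reflects the destructive nature of $\st_0$: it retains only the ``linear'' part of the homotopy type, collapsing all Whitehead brackets. On a simply connected space these brackets are the only obstruction to the homotopy type being a product of Eilenberg--MacLane spaces, so here $\st_0$ essentially computes the chain complex of the space and the Hurewicz theorem identifies its homology with the higher homotopy groups. The reduction from $X$ to $\widehat{X}_x$ is achieved by noting that $\st_0$ is compatible with the universal-cover construction in the appropriate sense (essentially because covering spaces are classified by functors into $1$-truncated targets, where $\st_0$ acts as the identity), so only the simply connected case is genuinely new. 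Once these identifications are in place, conservativity of $\st_0$ follows directly from the generalized Whitehead theorem (Theorem \ref{thm:generalized-whitehead-groups}) quoted at the beginning of the paper: a map $f: X \to Y$ inducing a weak equivalence on $\st_0$ induces isomorphisms on $\pi_0$, $\pi_1$, and higher $\pi_n$ of universal covers, hence is a weak equivalence.
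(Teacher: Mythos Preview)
The paper does not prove this theorem at all: it is stated as a background result with a citation to Brown--Higgins and no argument is given. So there is no ``paper's own proof'' to compare against; what you have written is a sketch of how one might reconstruct the result from scratch.

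Your outline of the construction (left Kan extension of the fundamental crossed complex of the simplex) and of the Quillen property (checking generating cofibrations) is the standard one and is fine as far as it goes. The part that is genuinely shaky is your treatment of the isomorphism $\pi_n(\st_0(X),x) \cong \pi_n(\widehat{X}_x,x)$ for $n\ge 2$. Note first that the statement as printed almost certainly intends $H_n(\widehat{X}_x)$ rather than $\pi_n(\widehat{X}_x)$: compare with Theorem~\ref{thm:generalized-whitehead-groups}, whose third bullet is about \emph{homology} of the universal cover, and recall that $\pi_n(\widehat{X}_x)\cong\pi_n(X)$ for $n\ge 2$, which would make $\st_0$ trivially conservative and contradict its destructive nature. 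Your paragraph seems to sense this tension but resolves it incorrectly: you invoke Hurewicz to ``identify homology with the higher homotopy groups,'' which is only valid for the first nonvanishing group. The actual argument is that for a CW (or simplicial) $X$ the groups $(\st_0 X)_n$ at a basepoint are the relative homotopy groups $\pi_n(X^{(n)},X^{(n-1)})$, which for $n\ge 2$ coincide with the cellular chains of the universal cover as $\pi_1$-modules; taking homology of the crossed complex then yields $H_n(\widehat{X}_x)$, not $\pi_n$. With that correction, your deduction of conservativity from Theorem~\ref{thm:generalized-whitehead-groups} is correct.
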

\subsection{Groupoid-equivariant chain complexes}
We recall briefly the basic definitions of groupoid-equivariant chain complexes. For more detail see \cite{Brown_Higgins_Sivera_2011}.
\par 
Let $\chbasic$ be the category of nonnegatively homologically graded chain complexes of abelian groups. 
For $G$ a groupoid, we can consider the category $[G, \chbasic]$ of functors from $G$ to $\chbasic$.
These assemble via a grothendieck construction into
$$\ch := \int_{\gpd} [-, \chbasic]$$
More explicitly, an object of $\ch$ is a pair $(G,C_\bullet)$ where $G$ is a groupoid and $C_\bullet$ is a functor $G \to \chbasic$.  
A morphism $(G,C_\bullet) \to (H, D_\bullet)$ consists of a functor $F:G \to H$, along with a natural transformation 
$\eta: C_\bullet \Rightarrow D_\bullet \circ F $. 
We will frequently refer to an object of $\ch$ as simply $C_\bullet$, suppressing the groupoid part from the notation.
\par 
The category $\ch$ has a monoidal product $\otimes$, which is induced from the monoidal product on $\chbasic$.
The unit for $\otimes$ is the object $\mathbb{Z}[0]_\bullet = (\bullet, \underline{\mathbb{Z}[0]})$, where $\bullet$ is the terminal groupoid and $\underline{\mathbb{Z}[0]} :\bullet \to \chbasic$ is the functor mapping the unique object of $\bullet$ to the chain complex which is $\mathbb{Z}$ in degree $0$ and $\zero$ in every other degree.
We will often be concerned with $\chaug$, the category of \textit{augmented} groupoid-equivariant chain complexes, which is the slice category $\ch /\mathbb{Z}[0]_\bullet$. 
\par 
The other monoidal product on $\ch$ which we will use is the ordinary categorical product, which we will denote $\times$. 
The unit for this product is the terminal object in $\ch$. This is given by the object $(\bullet,\underline{\zero})$ where $\underline{\zero}$ is the constant functor at the zero chain complex. 
\par 
As shown in \cite{Strong2026}, the category $\ch$ has a model structure inherited by viewing it as a Grothendieck construction. It follows that $\chaug$, being a slice category, inherits a model structure. 
It is furthermore shown in \cite{Strong2026} that $\ch$ is a cartesian monoidal model category and that $\chaug$ is a monoidal model category with product induced by $\otimes$: for $C_\bullet,D_\bullet \in \ch$, the product of $C_\bullet \to \mathbb{Z}[0]_\bullet$ and $D_\bullet \to \mathbb{Z}[0]_\bullet$ is given by 
$$C_\bullet \otimes D_\bullet \to \mathbb{Z}[0]_\bullet \otimes \mathbb{Z}[0]_\bullet \to \mathbb{Z}[0]_\bullet$$
Where the second map is the unitor map. 

\section{Model Structures on Enriched Categories}\label{section:model-structures}
The object of this paper overall is to construct and study the strictification functor $\st_1: (\infty,1)\text{Cat} \to (\omega,1)\text{Cat}$, which we will construct explicitly as the composition of left adjoints in the diagram
\begin{center}
\begin{equation}
\begin{tikzcd}[sep = large]
\ssetcat \ar[r, bend left = 20, "\leftone "] & \tensorcrossedcat \ar[l, bend left = 20, "\rightone "] \ar[r, bend left = 20, "\lefttwo "] & \cartcrossedcat \cong \omegancat{1} \ar[l, bend left = 20, "\righttwo "]
\end{tikzcd}
\end{equation}
\end{center}
In this section we describe the model structures on our categories of interest, as well as a few auxiliary categories: the category $\gpdcat$ of categories enriched in (ordinary) groupoids, the category $\trackdgcataug$ of (augmented) categories tensor-enriched in the category $\ch$ of chain complexes over a varying groupoid, and the category $\trackchaincat$ of (unaugmented) categories cartesian-enriched in $\ch$.
The model structure we describe on $\tensorcrossedcat$ is constructed as an application of the theory of Dwyer-Kan model structures developed by Muro \cite{Muro2012}, or alternately the work of Berger and Moerdijk \cite{BergerMoerdijk2012}. Similarly for both $\trackdgcataug$ and $\trackchaincat$, which are detailed in \cite{Strong2026}.
The other model structures (on $\ssetcat$ and $\gpdcat$) are already well-known.
\subsection{The Bergner model structure on $\ssetcat$}
Our model of choice for $(\infty,1)$ categories will be the Bergner model structure on $\ssetcat$. We recall here the definitions: 
\begin{theorem}\cite{BergnerModelStructure}
There is a model category structure on $\ssetcat$ where a morphism $F: \C \to \D$ is:
\begin{enumerate}
	\item A weak equivalence if it induces an equivalence of categories $\pi_0 \C \to \pi_0 \D$, as well as weak equivalences of simplicial sets $\C[x,y] \to \D[Fx,Fy]$ for all $x,y \in \ob \C$.
	\item A fibration if it induces an isofibration of homotopy categories $\Ho(\C) \to \Ho(\D)$, as well as fibrations of simplicial sets $\C[x,y] \to \D[Fx,fF]$ for all $x,y \in \ob \C$.
	\item A cofibration if it has the left lifting property with respect to the acylic fibrations.
\end{enumerate}
\end{theorem}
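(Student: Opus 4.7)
The plan is to apply Kan's recognition theorem for cofibrantly generated model categories. First, I would identify candidate generating sets by lifting the generating (trivial) cofibrations of $\sset$ along the suspension functor $\sus: \sset \to \ssetcat$ defined in the conventions. Specifically, I would take the generating cofibrations to be $I = \{\sus(\partial\Delta^n \to \Delta^n) \mid n \ge 0\} \cup \{\emptyset \to \mathbf{1}\}$, where the last map accounts for the addition of objects. The generating trivial cofibrations $J$ consist of two families: the ``local'' family $\{\sus(\Lambda^n_k \to \Delta^n) \mid n \ge 1, 0 \le k \le n\}$, which handles hom-wise acyclicity, together with a ``global'' family of maps $\mathbf{1} \to H$, where $H$ is a cofibrant $\sset$-enriched ``walking equivalence'' with two objects $0,1$ and hom-spaces modelling an equivalence between them. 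The functor $\mathbf{1} \to H$ picks out one of the two objects.

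Next, I would verify the non-trivial hypotheses of the recognition theorem. The class $W$ of weak equivalences satisfies two-out-of-three and is closed under retracts, since it is defined by weak equivalences on hom-spaces (where $\sset$ satisfies these properties) together with equivalence of homotopy categories. Smallness of domains of $I$ and $J$ is immediate because each is a finitely-presented $\sset$-category. The verification that $I$-injectives are precisely the trivial fibrations (in the sense of being hom-wise trivial fibrations of simplicial sets and surjective on objects) is a direct diagram-chase.

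The main obstacle, and the heart of the proof, is showing $J\text{-cell} \subseteq W \cap I\text{-cof}$. Being in $I$-cof is standard once we know the generators of $I$. The subtle content is that pushouts along the generators of $J$ preserve weak equivalences; the local family is handled by left-properness of the Quillen model structure on $\sset$ applied hom-wise, but the global family requires a careful analysis of pushouts of $\sset$-categories along $\mathbf{1} \to H$. One must show that such a pushout ``adjoins an equivalence'' in a homotopically well-controlled way, which amounts to understanding free products of $\sset$-categories against a cofibrant interval. The standard approach is to model the pushout explicitly by free words and invoke a Reedy-type argument on word length to produce an equivalence with the original category.

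Finally, to conclude via the recognition theorem, I would verify that any map with the right lifting property against $I$ is a weak equivalence and has the right lifting property against $J$. The first follows from the characterization of $I$-injectives above (hom-wise trivial fibration plus surjective on objects implies weak equivalence). The second uses that hom-wise trivial fibrations lift against the local generators of $J$, while surjectivity on objects combined with being a fibration on homs lifts against $\mathbf{1} \to H$, giving the required path lifting for the equivalence in $H$. The characterization of fibrations as stated follows post-hoc from identifying maps with right lifting against $J$ as isofibrations on homotopy categories that are fibrations on hom-spaces. As this argument is due to Bergner \cite{BergnerModelStructure}, I would only sketch these steps and cite \emph{op.\ cit.} for the detailed verifications.
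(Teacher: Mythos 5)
The paper offers no proof of this statement; it is quoted directly from Bergner's paper \cite{BergnerModelStructure}, so there is no internal argument to compare against. Your outline does follow the shape of Bergner's actual proof (recognition theorem, local generators obtained by suspending the generating (trivial) cofibrations of $\sset$, plus global generators built from intervals), but it contains one genuine gap, and it is exactly the point that the surrounding sections of this paper are at pains to highlight.

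The gap is in the global generating trivial cofibrations $\mathbf{1} \to H$. For the recognition theorem you need every map in $J$ to be a weak equivalence (otherwise $J$-cell $\subseteq W$ cannot hold), and you need the $J$-injectives to be exactly the fibrations. If $H$ is the \emph{free} walking equivalence --- freely generated by $f : 0 \to 1$, $g : 1 \to 0$ and edges $\id_0 \to fg$, $\id_1 \to gf$ --- then $H$ is cofibrant and detects equivalences, but $\mathbf{1} \to H$ is \emph{not} a weak equivalence: the endomorphism space $H[0,0]$ is a free simplicial monoid on a $0$-cell $k$ and two $1$-cells $\id_0 \to k$ and $k \to k^{2}$, which is not weakly contractible (the several evident paths from $\id_0$ to $k^{2}$ are not identified by any higher cell, so $\pi_1$ is nontrivial). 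This is precisely why, in Section \ref{section:model-structures}, the generating trivial cofibration attached to the interval $\I$ is not $\mathbf{1} \to \I$ but $\theta^\otimes_\I : i^*\I \to \tilde{\I}$, whose source is a whole one-object category rather than a point. If instead you intend $H$ to be a \emph{coherent} (weakly contractible) walking equivalence, then $\mathbf{1} \to H$ is a weak equivalence, but now you must show that for an arbitrary $\C$ and any equivalence $x \simeq y$ in $\Ho(\C)$ there is a map $H \to \C$ realizing it; a single such $H$ does not obviously suffice, and Bergner's actual argument takes a \emph{set} of cofibrant, weakly contractible intervals of bounded cardinality and proves that this set is generating --- that is the hardest part of her paper, not a routine verification. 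Relatedly, your closing claim that ``surjectivity on objects combined with being a fibration on homs lifts against $\mathbf{1} \to H$'' is not automatic: Example \ref{example:local-and-iso-but-not-fibration-tensor} exhibits a local fibration inducing an isofibration of homotopy categories that nonetheless fails the interval lifting property in the $\crcom^\otimes$-enriched setting. That the analogous implication does hold for $\sset$-categories is a theorem depending on the coherence of the chosen intervals, not a formal consequence of the local conditions.
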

%	\begin{definition}
%		Let $I'$ be the set of functors of $\sset$-categories of the form $\sus[\del \Delta^n \to \Delta^n]$. A \emph{simplicial computad} is an $I'$-cellular object in $\ssetcat$, and a \textit{relative simplicial computad} is an $I'$-cellular morphism in $\ssetcat$.
%	\end{definition}
%	In other words, a simplicial computad is an $\sset$-category which is given by freely adjoining simplices to individual hom-spaces. 
\subsection{The model structure on $\tensorcrossedcat$.}
Ara and Lucas show in \cite{AraLucasFolkModelMonoidal} that $\omegancat{n}$ is a monoidal model category for the Gray tensor product, and that for $n=0$ this product is symmetric and satisfies the monoid axiom introduced by Schwede and Shipley in \cite{SchwedeShipley1998}. In particular, since the symmetric monoidal structure is equivalent to the tensor product $\otimes$ of crossed complexes under the equivalence of categories between $\stinfty$ and $\crcom$ (see \cite{Brown_Higgins_Sivera_2011} \cite{Brown2002}), $\crcom^\otimes$ is a monoidal model category satisfying the monoid axiom. 
Finally, we note that $\crcom$ is combinatorial, and closed monoidal, 
so overall it is a combinatorial closed symmetric monoidal model category satisfying the monoid axiom.
It then follows by Theorem 1.1 of \cite{Muro2012} that:
\begin{theorem}\label{thm:tensorcrossedcat-dwyer-kan-model-structure-exists}
There is a combinatorial model category structure on $\tensorcrossedcat$ in which a map $F : \C \to \D$ is:
\begin{itemize}
	\item A weak equivalence if each of the maps $\C[x,y] \to \D[Fx,Fy]$ is a weak equivalence in $\crcom$, and also the induced map $\Ho(F): \Ho(\C) \to \Ho(\D)$ is an equivalence of categories, and
	\item an acyclic fibration if it is surjective on objects and each of the maps $\C[x,y] \to \D[Fx,Fy]$ is an acyclic fibration in $\crcom$.
	%\item A cofibration if it has the left lifting property with respect to the acyclic fibrations
\end{itemize}
\end{theorem}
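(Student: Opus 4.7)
The plan is to directly invoke Theorem 1.1 of \cite{Muro2012}, which the discussion immediately preceding the theorem is already set up to apply. What remains is to carefully check that the hypotheses of Muro's theorem hold for the monoidal category $\crcom^\otimes$, and then to identify Muro's description of weak equivalences and acyclic fibrations with the one stated here.

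First I would verify each hypothesis of Muro's theorem in turn. Combinatoriality of $\crcom$ follows from the fact that it is a category of models of a limit-sketch (equivalently, it is locally presentable as an algebraic category over $\text{Set}$), and the generating cofibrations/trivial cofibrations given by Theorem \ref{thm:convenient-description-of-model-structure-on-crcom} have small (in fact, finite) domains. That $\crcom^\otimes$ is closed symmetric monoidal is in \cite{Brown_Higgins_Sivera_2011}; that it is a symmetric monoidal model category satisfying the monoid axiom is the content of \cite{AraLucasFolkModelMonoidal} transported along the equivalence $\crcom \cong \stinfty$. With these verified, Muro's theorem produces a combinatorial model structure on $\tensorcrossedcat$ in which the fibrations and weak equivalences are detected by a notion of Dwyer-Kan fibration/equivalence relative to $\crcom^\otimes$.

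Next I would match up Muro's description of the weak equivalences and acyclic fibrations with the one stated in the theorem. Muro defines weak equivalences to be those functors which are locally weak equivalences in $\crcom^\otimes$ and whose induced functor on homotopy categories is essentially surjective; since the local weak equivalence condition already implies that $\Ho(F)$ is fully faithful, this is equivalent to requiring that $\Ho(F)$ be an equivalence, matching the stated description. Similarly, Muro's acyclic fibrations are precisely the functors that are surjective on objects and locally acyclic fibrations, which is exactly the condition stated.

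There is no real obstacle here, since Muro has already done the heavy lifting: the argument is simply a careful verification of the hypotheses and an unpacking of the conclusion. The only step requiring slight care is checking the monoid axiom, but this follows directly from \cite{AraLucasFolkModelMonoidal} once one identifies $\crcom^\otimes$ with $\stinfty$ equipped with the Gray tensor product.
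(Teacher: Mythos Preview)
Your proposal is correct and follows essentially the same approach as the paper: verify that $\crcom^\otimes$ is a combinatorial closed symmetric monoidal model category satisfying the monoid axiom (using \cite{AraLucasFolkModelMonoidal} transported along $\crcom \cong \stinfty$), then invoke Theorem 1.1 of \cite{Muro2012}. Your unpacking of why Muro's weak equivalences and acyclic fibrations coincide with the stated ones is slightly more explicit than the paper's, but the argument is the same.
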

We will show later using the results of \cite{Strong2026} that the fibrations are precisely the local fibrations which induce isofibrations of homotopy categories (Theorem \ref{thm:tensorcrossedcat-has-naive-fibration-property}).
\subsection{The model structure on $\cartcrossedcat$}
In general, $\omegancat{n}$ has a model structure due to Ara and M{\'e}tayer in \cite{Ara_Metayer_11}, called the \textit{folk model structure}. For our purposes, we only need to consider the model structure on $\omegancat{1}$. 
In this section we will show that $\crcom$ is a cartesian monoidal model category, and that the resulting Dwyer-Kan model structure of $\cartcrossedcat$ coincides with the model structure transferred from the folk model structure on $\omegancat{1}$. 
\begin{theorem}\label{thm:crcom-cartesian-monoidal}
$\crcom$ is a cartesian closed monoidal model category.
\end{theorem}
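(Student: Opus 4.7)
The plan is to verify the three requirements of a cartesian closed monoidal model category: cartesian closedness of the underlying symmetric monoidal category, the pushout-product axiom relating the model structure to the monoidal structure, and the unit axiom. Cartesian closedness follows from local presentability (since $\crcom$ is combinatorial by Theorem \ref{thm:convenient-description-of-model-structure-on-crcom}) together with the fact that $- \times C$ preserves colimits for each $C$: this reduces, via the levelwise construction of colimits, to the preservation of colimits by cartesian products of sets, groupoids, and groups or abelian groups. The adjoint functor theorem for locally presentable categories then produces the required internal hom; alternatively, an explicit construction can be found in Brown-Higgins-Sivera. The unit axiom is automatic because the unit $\bullet$ is cofibrant (it is the codomain of the generating cofibration $i_0: \emptyset \to \bullet$), so the natural isomorphism $\bullet \times X \cong X$ is trivially a weak equivalence for any $X$.

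For the pushout-product axiom, since $- \times C$ preserves colimits and $\crcom$ is cofibrantly generated, it suffices to check the axiom on pairs of generating (acyclic) cofibrations from Theorem \ref{thm:convenient-description-of-model-structure-on-crcom}. For the cofibration case $i_m \square i_n$, I would compute $\mathbb{D}^m \times \mathbb{D}^n$ explicitly as a free crossed complex, generalizing Example \ref{example:tensor-v-product-intervals}: its generating cells correspond to pairs of generators from $\mathbb{D}^m$ and $\mathbb{D}^n$, with boundaries computed from the componentwise structure of the cartesian product. The pushout $(\mathbb{S}^{m-1} \times \mathbb{D}^n) \cup_{\mathbb{S}^{m-1} \times \mathbb{S}^{n-1}} (\mathbb{D}^m \times \mathbb{S}^{n-1})$ is then identified as the subcomplex obtained by omitting the top-dimensional generator, which exhibits $i_m \square i_n$ as cellular, hence a cofibration. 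For the acyclic case $i_m \square j_n$ (and symmetrically $j_m \square i_n$), one can construct an explicit deformation retract using the contractibility of $\mathbb{D}^n$: the natural collapse $\mathbb{D}^n \to \bullet$ extends to a strong deformation retract of $\mathbb{D}^m \times \mathbb{D}^n$ onto the relevant pushout, so the inclusion is a trivial cofibration.

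The main obstacle is the explicit combinatorial analysis of $\mathbb{D}^m \times \mathbb{D}^n$ when both $m, n \geq 2$, where the cartesian product mixes the groupoidal structure of low-dimensional cells with the abelian group structure of higher-dimensional cells. The interaction of the cartesian product with the boundary maps $\delta_k$ and the action of $C_1$ on the higher $C_k$ requires careful bookkeeping; in particular one has to verify that the subcomplex identification above genuinely respects all the axioms of Definition \ref{dfn:crossed-complex}. A cleaner alternative, if one wishes to avoid this calculation, is to transport the problem across the equivalence $\crcom \cong \stinfty$ of Theorem \ref{thm:crossed-omega-equivalence} and invoke the analogous statement for the folk model structure on $\omega$-groupoids with the cartesian product, if such a result is available in the literature.
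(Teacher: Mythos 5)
Your overall skeleton --- cartesian closure, the unit axiom via cofibrancy of $\bullet$, and reduction of the pushout-product axiom to the generating maps of Theorem \ref{thm:convenient-description-of-model-structure-on-crcom} --- matches the paper's proof. The gap is in the heart of the argument: the picture you describe for $\mathbb{D}^m \times \mathbb{D}^n$ is the picture for $\mathbb{D}^m \otimes \mathbb{D}^n$. The cartesian product of crossed complexes is computed \emph{levelwise}, $(C\times D)_i = C_i\times D_i$, so there are no ``product cells'' in degree $m+n$ and no top-dimensional generator for the pushout to omit. For $m,n\ge 2$ the paper's point is precisely that the pushout-product map
\[
\mathbb{D}^m\times\mathbb{S}^{n-1}\textstyle\coprod_{\mathbb{S}^{m-1}\times\mathbb{S}^{n-1}}\mathbb{S}^{m-1}\times\mathbb{D}^n\longrightarrow \mathbb{D}^m\times\mathbb{D}^n
\]
is an \emph{isomorphism}, because in these degrees the cartesian product behaves like a direct sum of chain complexes rather than a tensor product; this is what makes the high-dimensional cases trivial. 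Your proposed ``subcomplex obtained by omitting the top-dimensional generator,'' offered as a generalization of Example \ref{example:tensor-v-product-intervals}, generalizes the $\otimes$ half of that example, not the $\times$ half: note that $\mathbb{D}^1\times\mathbb{D}^1$ in that example has \emph{no} $2$-cell. The same confusion undercuts your treatment of $i_m\mathbin{\square} j_k$, where for $m,k\ge 2$ the comparison map is again already an isomorphism and no deformation retract is needed.

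The cases that actually require work are the low-dimensional ones, which your sketch does not engage with (and which the paper itself dispatches in one sentence). For example, the pushout-product of $i_1:\mathbb{S}^0\to\mathbb{D}^1$ with itself is the map from the free groupoid on the four edges of a square to the contractible groupoid on four objects; this map kills a free loop, in particular is not injective, and so cannot be exhibited as a relative cellular inclusion --- your ``identify a cellular subcomplex'' strategy gives no purchase here, and whatever argument handles these cases must look genuinely different. Two smaller points: colimits in $\crcom$ are \emph{not} computed levelwise (pushouts of the underlying groupoids involve amalgamated free products), so your justification that $-\times C$ preserves colimits needs repair, though the conclusion is correct and the paper simply cites Howie for cartesian closure; and your proposed fallback of transporting the problem across $\crcom\cong\stinfty$ does not help, because the monoidality result available in the literature (Ara--Lucas) concerns the Gray tensor product, not the cartesian one.
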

\begin{proof}
We need to prove three things: closure, the pushout-product axiom, and the unit axiom. 
Since the unit for $\times$ is cofibrant, the unit axiom follows from the pushout-product axiom (see e.g. \cite[Remark A.2]{AraLucasFolkModelMonoidal}). 
Cartesian closure is a special case of \cite[Corollary 2]{Howie1979}. 
So it remains to check the pushout-product axiom.
\par
It suffices to check the generating cofibrations and generating acylic cofibrations. We wish to show that for $i_{m}: \mathbb{S}^{m-1} \hookrightarrow \mathbb{D}^m$, $i_{n}: \mathbb{S}^{n-1} \to \mathbb{D}^n$ generating cofibrations, and $j_k: \bullet \to \mathbb{D}^k$ a generating acyclic cofibration, the dotted map 
\begin{center}
\begin{tikzcd}
	& \mathbb{S}^{m-1} \times \mathbb{S}^{n-1} \ar[ld] \ar[rd] \\
	\mathbb{D}^m \times \mathbb{S}^{n-1}   \ar[rd] \ar[rdd, bend right = 20]
	&	& \mathbb{S}^{m-1} \times \mathbb{D}^n \ar[ld] \ar[ldd, bend left = 20] \\
	&\mathbb{D}^m \times \mathbb{S}^{n-1}\coprod_{\mathbb{S}^{m-1} \times \mathbb{S}^{n-1}} \mathbb{S}^{m-1} \times \mathbb{D}^n \ar[d, dashed] \\
	& \mathbb{D}^m \times \mathbb{D}^n
\end{tikzcd}
\end{center}
is a cofibration, and similarly the induced map 
$$\mathbb{S}^{m-1} \times \mathbb{D}^k \coprod_{\mathbb{S}^{m-1} \times \bullet} \mathbb{D}^m \times \bullet \to \mathbb{D}^m \times \mathbb{D}^k$$
 is an acyclic cofibration. Working by cases, we can see that in fact for $m,n,k \ge 2$ these are all isomorphisms (essentially because the analagous statement for chain complexes of abelian groups is true); the other cases are not isomorphisms but are equally easy to verify. 
\end{proof}
\begin{theorem}\label{thm:cartcrossed-monoid-axiom}
$\crcom^\times$ satisfies the monoid axiom of \cite{SchwedeShipley2003}.
\end{theorem}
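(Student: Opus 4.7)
The plan is to verify the monoid axiom by showing that for every generating acyclic cofibration $j_n : \bullet \hookrightarrow \mathbb{D}^n$ and every $X \in \crcom$, the cartesian product $j_n \times X$ is itself an acyclic cofibration. Since acyclic cofibrations are closed under pushouts and transfinite compositions in any model category, this suffices for the monoid axiom.

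First, I would verify that $j_n \times X$ is a weak equivalence. Using the componentwise description of the cartesian product of crossed complexes established earlier in this subsection, one sees that the homotopy groups split as products: $\pi_0(A \times X) \cong \pi_0(A) \times \pi_0(X)$ and $\pi_k(A \times X, (a, x)) \cong \pi_k(A, a) \times \pi_k(X, x)$ for every $k \geq 1$ and every basepoint. In particular the functor $- \times X$ preserves weak equivalences for every $X$, and since $\mathbb{D}^n$ is contractible with $j_n$ a weak equivalence, $j_n \times X$ is an isomorphism on all homotopy groups and hence a weak equivalence.

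Second, I would verify the cofibration half. When $X$ is cofibrant, this is immediate from the pushout-product axiom of Theorem \ref{thm:crcom-cartesian-monoidal}: $j_n \times X$ is literally $j_n \square (\emptyset \hookrightarrow X)$, hence an acyclic cofibration. For general $X$, I would decompose $\mathbb{D}^n$ itself as a relative cell complex built from $\emptyset$ by iterated pushouts of the generating cofibrations $\mathbb{S}^{k-1} \hookrightarrow \mathbb{D}^k$. Since $-\times X$ commutes with colimits (from cartesian closedness of $\crcom^\times$), this expresses $j_n \times X$ as a transfinite composition of pushouts of maps of the form $\mathbb{S}^{k-1} \times X \hookrightarrow \mathbb{D}^k \times X$. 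It therefore suffices to verify that each such product of a generating cofibration with $X$ is itself a cofibration, which for $k \geq 2$ can be checked directly from the componentwise formula for $\times$, adjoining free cells at each object of $X$ in dimensions $k-1$ and $k$.

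The main obstacle is the $k = 1$ case of this cellular reduction, namely showing that $\mathbb{S}^0 \times X \hookrightarrow \mathbb{D}^1 \times X$ is a cofibration. This map corresponds to extending $X \sqcup X$ to the cylinder $\mathbb{D}^1 \times X$ by attaching connecting cells in every dimension, and verifying it is a relative cell complex requires an induction on the skeletal filtration of $X$ where, at the stage of dimension $d$, one attaches cylinder cells for each $d$-cell of $X$ using pushouts along $i_d$ and $i_{d+1}$. A comparison worth attempting is to import the analogous but more intricate argument performed by Ara and Lucas \cite{AraLucasFolkModelMonoidal} for the Gray tensor product, since as illustrated in Example \ref{example:tensor-v-product-intervals} the cartesian and Gray products differ by precisely the ``cylinder-cell'' data that governs the $k=1$ step here.
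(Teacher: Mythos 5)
Your strategy---show that $j_n \times X$ is an acyclic cofibration for \emph{every} $X$ and then invoke closure of acyclic cofibrations under pushout and transfinite composition---would certainly imply the monoid axiom, and your weak-equivalence half is correct (homotopy groups of a cartesian product of crossed complexes split as products, so $j_n\times X$ is always a weak equivalence). But the cofibration half is false, and this is exactly why the monoid axiom does not follow formally from the pushout-product axiom. Take $X = S^1$, the crossed complex with one object, automorphism group $\mathbb{Z} = \langle t\rangle$, and trivial higher groups. For $n \ge 3$ the map $X \cong \bullet\times X \to \mathbb{D}^n\times X$ adjoins in degrees $n-1$ and $n$ a single copy of $\mathbb{Z}$ on which $\Pi_1(X)=\mathbb{Z}$ acts \emph{trivially}, because the action on a product is componentwise and $(\mathbb{D}^n)_1$ is trivial. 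A relative cell complex would instead adjoin free $\mathbb{Z}[t,t^{-1}]$-modules, and a retract of one adjoins projectives; but a nonzero module with trivial $t$-action cannot embed in a free $\mathbb{Z}[t,t^{-1}]$-module (no nonzero element is fixed by $t$). Concretely, $j_n\times S^1$ fails the left lifting property against the acyclic fibration given by the augmentation $\mathbb{Z}[t,t^{-1}]\to\mathbb{Z}$ placed in degrees $n-1$ and $n$ over $S^1$ with $\delta=\mathrm{id}$: a lift would require an element $r$ with $tr=r$ and augmentation $1$. The same issue already sinks your cellular reduction at the step ``$i_k\times X$ adjoins free cells at each object of $X$'': the new cells form one trivial summand per object, not the free module $\bigoplus_x \mathbb{Z}[\Pi_1X(x,-)]$, and these agree only when $\Pi_1(X)$ is discrete. (Your $k=0$ step $\emptyset\to X$ and the skeletal induction you propose for $k=1$ additionally presuppose that $X$ is cofibrant, which an arbitrary crossed complex is not.)

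Once the cofibration claim is dropped, knowing that each $j_n\times X$ is a weak equivalence is not enough: weak equivalences are not stable under pushout, and the monoid axiom quantifies over pushouts and transfinite compositions of the maps $X\times j$. The paper's proof (Appendix \ref{appendix:the-monoid-axiom}) supplies precisely the missing stability: it uses the pushout-product axiom to show that every acyclic cofibration $j$ is a \emph{strong $\iso$-transformation retract} (a deformation retract with homotopy parametrized by $\iso=\mathbb{D}^1$, constant over the source), observes that this structure is manifestly preserved by $X\times(-)$, and proves that such retracts are weak equivalences and are stable under pushout; closure of weak equivalences under the relevant transfinite compositions then finishes the argument. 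To repair your proposal you would need to replace ``acyclic cofibration'' by some such class of homotopy-theoretic acyclic cofibrations that is closed under $X\times(-)$, pushouts, and transfinite composition and is contained in the weak equivalences; the class of strong $\iso$-transformation retracts is exactly that.
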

We relegate this proof to an appendix (Appendix \ref{appendix:the-monoid-axiom}), since it requires introducing some additional concepts. Since $\crcom^\times$ is a combinatorial closed symmetric monoidal category satisfying the monoid axiom, we again apply Theorem 1.1 of \cite{Muro2012} to obtain:
\begin{theorem}\label{thm:cartcrossedcat-dwyer-kan-model-structure-exists}
There is a combinatorial model category structure on $\cartcrossedcat$ in which a map $F : \C \to \D$ is:
\begin{itemize}
	\item A weak equivalence if each of the maps $\C[x,y] \to \D[Fx,Fy]$ is a weak equivalence in $\crcom$, and also the induced map $\Ho(F): \Ho(\C) \to \Ho(\D)$ is an equivalence of categories, and
	\item an acyclic fibration if it is surjective on objects and each of the maps $\C[x,y] \to \D[Fx,Fy]$ is an acyclic fibration in $\crcom$.
\end{itemize}
\end{theorem}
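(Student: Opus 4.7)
The plan is simply to invoke Theorem 1.1 of \cite{Muro2012}, which takes as input a combinatorial closed symmetric monoidal model category satisfying the monoid axiom and outputs a Dwyer--Kan style model structure on its category of enriched categories. All three hypotheses on $\V = \crcom^\times$ have effectively been verified already: combinatoriality of $\crcom$ is standard (it is the underlying category of the folk model structure, which is cofibrantly generated by Theorem \ref{thm:convenient-description-of-model-structure-on-crcom} and locally presentable), closed symmetric monoidality as a model category is exactly Theorem \ref{thm:crcom-cartesian-monoidal}, and the monoid axiom is Theorem \ref{thm:cartcrossed-monoid-axiom}.

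Given these inputs, Muro's construction immediately produces a combinatorial model structure on $\crcom^\times\text{-}\mathcal{Cat}$ whose weak equivalences are the Dwyer--Kan equivalences (i.e.\ maps which are locally weak equivalences and induce essentially surjective functors on homotopy categories), and whose acyclic fibrations are precisely the maps which are surjective on objects and locally acyclic fibrations. These characterizations are exactly the ones claimed in the theorem, so no further verification is needed beyond quoting Muro.

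There is really no main obstacle at this stage: the genuine work has been pushed into Theorems \ref{thm:crcom-cartesian-monoidal} and \ref{thm:cartcrossed-monoid-axiom} (the latter itself being deferred to Appendix \ref{appendix:the-monoid-axiom}). The proof I write will therefore be a short paragraph noting that the three hypotheses of \cite{Muro2012} Theorem 1.1 hold by the cited results, and that the claimed descriptions of weak equivalences and acyclic fibrations are exactly what that theorem provides. This parallels the analogous proof for $\tensorcrossedcat$ (Theorem \ref{thm:tensorcrossedcat-dwyer-kan-model-structure-exists}) in the previous subsection, differing only in the choice of monoidal structure.
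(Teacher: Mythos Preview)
Your proposal is correct and matches the paper's own proof exactly: the paper simply notes that $\crcom^\times$ is a combinatorial closed symmetric monoidal model category satisfying the monoid axiom (by Theorems \ref{thm:crcom-cartesian-monoidal} and \ref{thm:cartcrossed-monoid-axiom}) and then applies Theorem 1.1 of \cite{Muro2012}.
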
 
We will show later using the results of \cite{Strong2026} that the fibrations are precisely the local fibrations which induce isofibrations of homotopy categories (Theorem \ref{thm:lefttwo-is-quillen-and-cartcrossed-has-naive-fibration-property}).
This enriched model structure recovers the ``folk'' model structure on $(\omega,1)$-categories.
\begin{theorem}\label{thm:cartcrossedcat-generating-acyclic-cofibrations}
The Dwyer-Kan model structure on $\cartcrossedcat$ and the model structure on $\cartcrossedcat$ induced from the folk model structure of $\omegancat{1}$ are the same. 
\end{theorem}
\begin{proof}
The model structures have the same set of acylic fibrations (which can be checked by checking that the generating cofibrations are the same) and the same fibrant objects (every object is fibrant). 
Thus they are the same model structure by Proposition E.1.10 of \cite{joyalbook}.
\end{proof}
\subsection{The model structure on $\gpdcat$}
 The category $\gpd$ of (ordinary) groupoids is cartesian closed, with the mapping groupoid $\gpd [G,H]$ being the category of functors $G \to H$ (which is a groupoid). 
 Furthermore, it is a closed monoidal model category with respect to the cartesian monoidal structure (the statement for categories is due to Rezk in \cite{Rezk2021}, but the groupoid case follows similarly).
Again applying the machinery of \cite{Muro2012} or \cite{BergerMoerdijk2012}, we get a model structure on $\gpdcat$:
 \begin{theorem}\label{thm:gpd-cat-model-structure-exists}
 	There is a model structure on $\gpdcat$, the category of categories enriched in $\gpd$ with the cartesian monoidal structure, in which a morphism $F: \C \to \D$ is:
 	\begin{itemize}
 		\item A weak equivalence if
 		\begin{itemize}
 			\item each of the maps $\C[x,y] \to \D[Fx,Fy]$ are equivalences of groupoids, and
 			 \item the induced map  $\Ho(F): \Ho(\C) \to \Ho(\D)$ is an equivalence of categories.
		\end{itemize}
 		\item A fibration if it is a local fibration and induces an isofibration of homotopy categories
 	\end{itemize}
 \end{theorem}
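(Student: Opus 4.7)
The plan is to obtain the model structure as a direct application of the Dwyer--Kan model structure machinery of Muro \cite{Muro2012} (Theorem 1.1), or equivalently of Berger--Moerdijk \cite{BergerMoerdijk2012}, in exact parallel with Theorems \ref{thm:tensorcrossedcat-dwyer-kan-model-structure-exists} and \ref{thm:cartcrossedcat-dwyer-kan-model-structure-exists}. Once the input hypotheses are verified for $\grpd$ with its cartesian monoidal structure, the machinery produces the model structure on $\grpdcat$ with precisely the stated characterization of weak equivalences and acyclic fibrations. What must be verified is that $\grpd^\times$ is a combinatorial closed symmetric monoidal model category satisfying the monoid axiom of \cite{SchwedeShipley2003}.

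First I would collect the standard ingredients. The canonical (folk) model structure on $\grpd$, with weak equivalences the equivalences of groupoids, cofibrations the functors injective on objects, and fibrations the isofibrations, is combinatorial; a convenient small generating set of cofibrations is given by the inclusions $\emptyset \hookrightarrow \ast$ and $\ast \sqcup \ast \hookrightarrow \mathbb{I}$, where $\mathbb{I}$ is the walking isomorphism, and a generating acyclic cofibration is the inclusion $\ast \hookrightarrow \mathbb{I}$. Cartesian closure was noted in the theorem statement. The pushout--product axiom is then the restriction of Rezk's analogous statement for $\mathrm{Cat}$ in \cite{Rezk2021} to the full subcategory of groupoids, and can be checked directly on these small generating sets. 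The unit axiom is automatic because the monoidal unit (the point groupoid) is cofibrant.

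The remaining hypothesis is the monoid axiom, which in sharp contrast to the more delicate verification for $\crcom^\times$ relegated to Appendix \ref{appendix:the-monoid-axiom} is routine here. The essential point is that cartesian product with an arbitrary groupoid $G$ preserves equivalences of groupoids and injectivity on objects, so $G \times j$ is a trivial cofibration whenever $j$ is; this is stronger than one typically has in a monoidal model category, and stems from the fact that every groupoid is both cofibrant and fibrant. The folk model structure on $\grpd$ is moreover left proper, so pushouts of such trivial cofibrations along arbitrary morphisms remain trivial cofibrations, and transfinite compositions of trivial cofibrations in the locally presentable category $\grpd$ are again trivial cofibrations.

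With all hypotheses of \cite{Muro2012} Theorem 1.1 verified, the conclusion is immediate. The only conceivable obstacle is the monoid axiom, but it is genuinely easy in the groupoid setting precisely because cartesian product with an arbitrary groupoid is fully homotopical, a phenomenon that fails for the non-cartesian monoidal structures on $\crcom$ and accounts for all the real work in the analogous earlier theorems.
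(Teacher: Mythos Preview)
Your proposal is correct and takes essentially the same approach as the paper, which does not give a separate proof but simply notes before the theorem that $\grpd$ is a cartesian closed monoidal model category (citing \cite{Rezk2021} for $\mathrm{Cat}$) and then invokes the machinery of \cite{Muro2012} or \cite{BergerMoerdijk2012}. Your explicit verification of the monoid axiom is more than the paper supplies; one small quibble is that your appeal to left properness is unnecessary, since cobase changes of trivial cofibrations are trivial cofibrations in any model category.
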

This model structure will be useful to us mainly as an auxiliary tool to compare the ``homotopy $(2,1)$-categories'' of our enriched categories (where by $(2,1)$-category we simply mean a $\gpd$-category).
\subsection{The model structure on $\trackdgcataug$ and $\trackchaincat$}
In the companion paper \cite{Strong2026}, it is shown that $\chgpd$ is a closed monoidal model category satisfying the monoid axiom with respect to both $\otimes$ and $\times$.
In particular, the following results are deduced:
\begin{theorem}
There is a model structure on $\trackdgcataug$, the category of categories enriched in $\chaug$ with the monoidal structure given by the tensor product $\otimes$, in which a morphism $F: \C \to \D$ is:
 	\begin{itemize}
 		\item A weak equivalence if
 		\begin{itemize}
 			\item each of the maps $\C[x,y] \to \D[Fx,Fy]$ are equivalences of augmented groupoid-equivariant chain complexes, and
 			\item the induced map  $\Ho(F): \Ho(\C) \to \Ho(\D)$ is an equivalence of categories.
		\end{itemize}
 		\item A fibration if it is a local fibration and induces an isofibration of homotopy categories.
 	\end{itemize}
 \end{theorem}
 
 \begin{theorem}
There is a model structure on $\trackchaincat$, the category of categories enriched in $\chgpd$ with the monoidal structure given by the cartesian product $\times$, in which a morphism $F: \C \to \D$ is:
 	\begin{itemize}
 		\item A weak equivalence if
 		\begin{itemize}
 			\item each of the maps $\C[x,y] \to \D[Fx,Fy]$ are equivalences of groupoid-equivariant chain complexes, and
 			\item the induced map  $\Ho(F): \Ho(\C) \to \Ho(\D)$ is an equivalence of categories.
		\end{itemize}
 		\item A fibration if it is a local fibration and induces an isofibration of homotopy categories.
 	\end{itemize}
 \end{theorem}

\section{The Quillen Adjunctions}\label{section:Quillen-pairs}
In this section we define all of the Quillen adjoint pairs that we discuss towards proving our main theorem.
\par Our Quillen adjunctions will be obtained by applying the results of \cite{Strong2026} to the following commutative (up to natural isomorphism) diagram of Quillen adjunctions between monoidal model categories:
\begin{center}
\begin{equation}\label{diagram:unenriched-functors}
\begin{tikzcd}[sep = huge]
\sset^\times \arrow[r, bend left = 5, shift left = 1, "\st_0 "] 
	& \crcom^\otimes \arrow[r, bend left = 5, shift left = 1, "\Id "] \ar[l, bend left = 5, "U_0 "] \ar[d, bend left = 10, shift left = 2, "\chainification"]
	& \crcom^\times \ar[l, bend left = 5, "\Id "]\ar[d, bend left = 10, shift left = 2, "\chainification"]
\\
	& \chaug^\otimes \ar[u, bend left = 10, "\crossification "]  \ar[r, bend left = 10, "U"]
	& \chgpd^\times \ar[u, bend left = 10, "\crossification "] \ar[l, bend left = 10, "{- \oplus \mathbb{Z}[0]_\bullet}"] \ar[r, bend left = 10, "{\Pi_1}"]
	& \gpd^\times \ar[l, bend left = 10, "{\iota}"]
\end{tikzcd}
\end{equation}
\end{center}
In this diagram, (in which left adjoints go down or rightwards, and right adjoints go up or leftwards) each right adjoint is lax monoidal.
As explained in \cite{Strong2026} or \cite{Muro2012}, since each right adjoint is lax monoidal, we have an induced diagram of adjoints, which we name as in the below diagram:
\begin{center}
\begin{equation}\label{diagram:enriched-functors}
\begin{tikzcd}[sep = huge]
\ssetcat \ar[rr, bend left = 25, "\st_1"] \arrow[r, bend left = 5, shift left = 1, "\leftone "] 
	& \tensorcrossedcat \arrow[r, bend left = 5, shift left = 1, "\lefttwo "] \ar[l, bend left = 10, "\rightone "] \ar[d, bend left = 10, shift left = 2, "\catchainificationaug"]
	& \cartcrossedcat \ar[l, bend left = 10, shift left = 1, "\righttwo "]  \ar[d, bend left = 10, "\catchainification"]
\\
	& \trackdgcataug \ar[u, bend left = 10, "\catcrossificationaug"]  \ar[r, bend left = 10, "\dgbarconstruction"]
	& \trackchaincat \ar[u, bend left = 10, shift left = 1, "\catcrossification"] \ar[l, bend left = 10, "\dgbaradjoint"] \ar[r, bend left = 10, "\hotwo "]
	& \gpdcat \ar[l, bend left = 10, "{\iota}"]
\end{tikzcd}
\end{equation}
\end{center}	
where we have defined $\st_1$ to be the composition $\lefttwo \circ \leftone$ (we omit its adjoint $U_1$ from the diagram).
Since each right adjoint in diagram \eqref{diagram:enriched-functors} is given by applying the corresponding right adjoint in diagram \eqref{diagram:unenriched-functors} locally, it follows that the diagram commutes.
\par
We will describe each adjoint pair in turn and prove that they are Quillen adjunctions, using the results of \cite{Strong2026}.
\subsection{The adjunction between $\ssetcat$ and $\tensorcrossedcat$.}
We describe the ``local strictification'' adjunction 
$$
\leftone: \ssetcat \rightleftarrows \tensorcrossedcat: \rightone
$$
The adjunction is induced from the adjunction $\st_0: \sset \rightleftarrows \crcom :  U_0$ of Theorem \ref{thm:fundamental-crossed-complex-conservative}.
\begin{theorem}
There is a Quillen adjunction 
$$
\leftone: \ssetcat\rightleftarrows \tensorcrossedcat: \rightone
$$
where each functor is the identity on the object set and 
for $\D \in \tensorcrossedcat$ the $\sset$-category $\rightone(\D)$ has $\rightone(\D)[x,y] = U_0(\D[x,y])$. 
\end{theorem}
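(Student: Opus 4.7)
The result follows by applying Theorem \ref{thm:lax-monoidal-induces-enriched} to the underlying adjunction $\st_0 \dashv U_0$ of Theorem \ref{thm:fundamental-crossed-complex-conservative}, once we equip $U_0$ with a lax monoidal structure, and then checking that the resulting enriched adjunction is Quillen. The plan proceeds in three steps.

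First, I would verify that $\st_0 : \sset^\times \to \crcom^\otimes$ is strong monoidal, producing a natural isomorphism $\st_0(X \times Y) \cong \st_0(X) \otimes \st_0(Y)$. Since $\st_0$ is a left adjoint and therefore preserves colimits, it suffices to produce such an isomorphism on representables $\Delta^m$ and $\Delta^n$, where the statement reduces to the Eilenberg--Zilber theorem for fundamental crossed complexes of Brown and Higgins \cite{Brown_Higgins_Sivera_2011}. Any right adjoint of a strong monoidal functor inherits a lax monoidal structure via the mate construction, so Theorem \ref{thm:lax-monoidal-induces-enriched} applies and produces the adjunction $\leftone \dashv \rightone$ with the stated formula for $\rightone$; because $\st_0$ is strong monoidal, $\leftone$ is likewise computed by applying $\st_0$ locally.

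Second, $\rightone$ preserves acyclic fibrations: by Theorem \ref{thm:tensorcrossedcat-dwyer-kan-model-structure-exists}, these are exactly maps surjective on objects and locally acyclic fibrations in $\crcom$, and $\rightone$ preserves both properties since it is the identity on objects and $U_0$ is right Quillen. The image therefore satisfies the Bergner characterization of acyclic fibrations in $\ssetcat$. Third, $\rightone$ preserves fibrations. Fibrations in $\tensorcrossedcat$ are characterized by the right lifting property against complicated generating acyclic cofibrations including $\theta_\I^\otimes$, but fortunately fibrations in the target $\ssetcat$ have the simpler characterization of being locally fibrations together with an isofibration on $\Ho$. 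Given a fibration $F : \C \to \D$ in $\tensorcrossedcat$, $F$ is in particular locally a fibration and induces an isofibration $\Ho(F)$. Local fibrations are preserved by $\rightone$ since $U_0$ is right Quillen. For the isofibration condition, the natural isomorphism $\pi_0 \circ U_0 \cong \pi_0$ from Theorem \ref{thm:fundamental-crossed-complex-conservative} implies that $\Ho \circ \rightone$ is naturally isomorphic to $\Ho$ as a functor $\tensorcrossedcat \to \mathrm{Cat}$, so $\Ho(\rightone(F))$ is identified with $\Ho(F)$ and is therefore an isofibration.

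The step I expect to require the most care is the first: one must match conventions so that the known Eilenberg--Zilber equivalence for fundamental crossed complexes of filtered spaces gives a genuine strong monoidal structure on $\st_0$ at the level of simplicial sets, rather than merely an equivalence up to zig-zag. The remainder of the proof is essentially formal and exploits the fact that fibrations in the target $\ssetcat$ do not see the extra Gray-tensor coherence data encoded by $\theta_\I^\otimes$, which is why the direction of this Quillen pair works cleanly even though $\leftone$ is far from preserving fibrancy of composition data.
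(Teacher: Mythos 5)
Your first step contains a genuine error. The functor $\st_0 : \sset^\times \to \crcom^\otimes$ is \emph{not} strong monoidal, and the paper is explicit about this (``Note that while $\rightone$ is given by applying $U_0$ locally, $\leftone$ is not, since $\st_0$ is not strong monoidal''). The Eilenberg--Zilber theorem for fundamental crossed complexes (Tonks) produces only a natural \emph{weak equivalence} $\st_0(X\times Y)\to \st_0(X)\otimes\st_0(Y)$ --- in fact a strong deformation retraction --- not a natural isomorphism; this is precisely what makes $\st_0\dashv U_0$ a \emph{weak} monoidal Quillen pair in the sense of Schwede--Shipley rather than a strong one. A concrete obstruction: $\st_0(\Delta^1\times\Delta^1)$ has two generators in dimension $2$ (one for each nondegenerate $2$-simplex of $\Delta^1\times\Delta^1$), whereas $\st_0(\Delta^1)\otimes\st_0(\Delta^1)$ has a single $2$-dimensional generator $\ell\otimes\ell$, as in Example \ref{example:tensor-v-product-intervals}. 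Your reduction to representables also does not go through as stated, since $\Delta^m\times\Delta^n$ is not representable. Consequently your conclusion that ``$\leftone$ is likewise computed by applying $\st_0$ locally'' is false; the paper only obtains this identification up to weak equivalence on cofibrant objects (Lemma \ref{lem:local-strictification-is-locally-strictification}), via the weak monoidal Quillen pair structure.

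The error is repairable: all that is needed for the adjunction to exist is that $U_0$ be lax monoidal (Theorem \ref{thm:lax-monoidal-induces-enriched}), and this is true --- it is the mate of the Eilenberg--Zilber comparison, or can be constructed directly --- without any strong monoidality of $\st_0$. Granting existence of the adjunction, your second and third steps are a correct and rather more elementary verification of the Quillen condition than the paper's: the paper instead cites Tonks's theorem that $\st_0\dashv U_0$ is a weak monoidal Quillen pair and invokes Theorem 1.4 of \cite{Muro2012} wholesale, whereas you check directly that $\rightone$ preserves acyclic fibrations (surjective on objects plus local acyclic fibration, matching the Bergner characterization) and fibrations (local fibration plus isofibration on $\Ho$, using $\pi_0\circ U_0\cong\pi_0$). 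That direct check is a legitimate alternative route and correctly exploits the fact that fibrations in the target $\ssetcat$ do not see the coherence data of $\theta^\otimes_\I$. But as written, the proposal's foundational claim and the asserted local formula for $\leftone$ are wrong and must be replaced by the lax monoidality of $U_0$ alone.
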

\begin{proof}
By the main result of \cite{Tonks_2003}, the adjunction $\st_0 \dashv U_0$ is a weak monoidal Quillen pair in the sense of \cite{SchwedeShipley2003}. Thus this follows by Theorem 1.4 of \cite{Muro2012}.
\end{proof} 
Note that while $\rightone$ is given by applying $U_0$ locally, $\leftone$ is not, since $\st_0$ is not strong monoidal.
However, as we stated above, we do know that $\st_0 \dashv U_0$ is weak Quillen monoidal, i.e. in general we have a natural weak equivalence $\st_0(X \times Y) \to \st_0(X) \otimes \st_0(Y)$ induced by the lax monoidal structure of $U_0$.
Hence, by Theorem 1.5 of \cite{Muro2012} we have the following:
\begin{lemma}\label{lem:local-strictification-is-locally-strictification}
Let $\C$ be a cofibrant $\sset$-category. Then for objects $x,y$ in $\C$, we have natural isomorphism in $\Ho(\crcom)$
$$\st_0\left( \C[x,y] \right) \cong \leftone(\C)[x,y]$$
\end{lemma}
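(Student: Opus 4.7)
The plan is to invoke Theorem 1.5 of \cite{Muro2012} essentially as a black box, since the preceding paragraphs have assembled exactly the hypotheses it requires. That theorem asserts that for a weak monoidal Quillen pair $L \dashv R$ between suitable (combinatorial closed symmetric monoidal, satisfying the monoid axiom) monoidal model categories, the total left derived functor of the induced change of base $L^{\mathrm{cat}}$ on enriched categories agrees, on cofibrant inputs, with $L$ applied locally to hom-objects, up to natural weak equivalence.

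First I would point out that the weak monoidal Quillen hypothesis is exactly the content of Tonks's result cited in the proof that $\leftone \dashv \rightone$ is a Quillen adjunction: the lax monoidal structure on $U_0$ gives comparison maps $\st_0(X) \otimes \st_0(Y) \to \st_0(X \times Y)$ which Tonks proves are weak equivalences on cofibrant inputs. The combinatorial and monoid-axiom hypotheses on the source $\sset^\times$ are classical, and on the target $\crcom^\otimes$ were assembled in the discussion preceding Theorem \ref{thm:tensorcrossedcat-dwyer-kan-model-structure-exists}.

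Second I would argue that $\C[x,y]$ is cofibrant in $\sset$. This is a standard consequence of $\C$ being cofibrant in the Bergner model structure on $\ssetcat$: generating cofibrations of $\ssetcat$ have the form $\sus(i)$ for $i$ a generating cofibration of $\sset$, together with $\emptyset \to \mathbf{1}$, so a cellular cofibrant replacement of $\C$ has each hom built by attaching cofibrations in $\sset$. On such a cofibrant hom-object, $\st_0$ already represents its own total left derived functor. Combining this with Muro's theorem yields the desired natural isomorphism $\st_0(\C[x,y]) \cong \leftone(\C)[x,y]$ in $\Ho(\crcom)$.

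The only real obstacle is bookkeeping: making sure that ``natural'' here refers to maps of cofibrant $\sset$-categories that preserve the chosen pair $(x,y)$ of objects, since $\leftone$ is identity on objects but $\st_0$ on a varying hom-object is only functorial in morphisms of $\sset$-categories which fix source and target. Once this is spelled out, the lemma is an immediate consequence of Muro's theorem with no further computation.
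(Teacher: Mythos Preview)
Your approach is exactly the paper's: the lemma is stated immediately after observing that $\st_0 \dashv U_0$ is a weak monoidal Quillen pair (via Tonks), and the proof is a one-line invocation of Theorem~1.5 of \cite{Muro2012}. One small correction: the comparison map induced by the lax monoidal structure on $U_0$ goes $\st_0(X \times Y) \to \st_0(X) \otimes \st_0(Y)$ (the oplax direction on the left adjoint), not the direction you wrote; this is the map Tonks shows to be a weak equivalence, and it is the one required by the definition of weak monoidal Quillen pair.
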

Hence, we can compute $\leftone$ up to local weak equivalence (just not up to isomorphism) by applying $\st_0$ to the hom-spaces. 
For example:
\begin{example}
	Consider the $\sset$-category $\C$ given as the pushout in $\ssetcat$
	\begin{center}
	\begin{tikzcd}
	\bullet \ar[d, "\iota_1"] \ar[r, "\iota_0"] & \sus(\Delta^1) \ar[d, dashed]\\
	\sus(\Delta^1) \ar[r, dashed] & \C
	\end{tikzcd}
	\end{center}
	Thus $\C$ has three objects, which we call $\{0,1,2\}$, and $\C[0,1] \cong \C[1,2] \cong \Delta^1$, while $\C[0,2] \cong \Delta^1 \times \Delta^1$, while the other hom-objects are either empty or a point. 
	It is not hard to prove that for any simplicial set $X$, $\leftone(\sus(X)) \cong \sus(\st_0(X))$. So, since $\leftone$ must preserve pushouts, we have a pushout diagram
	\begin{center}
	\begin{tikzcd}
	\bullet \ar[d, "\iota_1"] \ar[r, "\iota_0"] & \sus(\st_0(\Delta^1)) \ar[d, dashed]\\
	\sus(\leftone(\st_0^1)) \ar[r, dashed] & \leftone(\C)
	\end{tikzcd}
	\end{center}
	And so $\leftone(\C)[0,2]$ must be $\Delta^1 \otimes \Delta^1$, which we described in example \ref{example:tensor-v-product-intervals}, in particular it has one generator in dimension $2$.
	On the other one, $\st_1(\C[0,2])$ is the crossed complex which has a fundamental groupoid formed by two noncommuting triangles
	\begin{center}
\begin{tikzcd}[sep = large]
	0\otimes 0  \ar[r] \ar[dr] \ar[d] &  1 \otimes 0 \ar[d] \\
	0 \otimes 1 \ar[r] & 1 \otimes 1
\end{tikzcd}
\end{center}
and two generators in dimension $2$ with boundaries the bottom left triangle and the upper right triangle, corresponding to the two nondegenerate simplices. 
So, $\leftone(\C)[0,2] \not \cong \st_1(\C[0,2])$.
However, they are weakly equivalent, as they are both contractible.
\end{example}
\par Finally, we note the following, which is an application of \cite[Theorem A]{Strong2026}
\begin{theorem}\label{thm:tensorcrossedcat-has-naive-fibration-property}
	The fibrations of $\tensorcrossedcat$ are exactly the local fibrations which induce isofibrations of homotopy categories.
\end{theorem}
\subsection{The adjunction between $\tensorcrossedcat$ and $\cartcrossedcat$}
The adjunction
$$
\lefttwo: \tensorcrossedcat \rightleftarrows \cartcrossedcat : \righttwo
$$
is similar, but less well-behaved than the adjunction $\leftone \dashv \rightone$. 
\begin{theorem}\label{thm:lefttwo-is-quillen-and-cartcrossed-has-naive-fibration-property}
There is a Quillen adjunction 
$$
\lefttwo: \tensorcrossedcat \rightleftarrows \cartcrossedcat : \righttwo
$$
Where each functor is the identity on the object set, and for $\D \in \cartcrossedcat$ we have that $\righttwo(\D)[x,y] = \D[x,y]$. 
Further, the fibrations in $\cartcrossedcat$ are exactly the local fibrations which induce isofibrations of homotopy categories.
\end{theorem}
\begin{proof}
The natural map $C \otimes D \to C \times D$ makes the identity functor on $\crcom$ a lax monoidal functor from $\crcom^\times \to \crcom^\otimes$, in particular the identity adjunction $\Id: \crcom^\otimes \rightleftarrows \crcom^\times : \Id$ has lax monoidal right adjoint. Since the unit $\bullet$ is cofibrant, and the left adjoint preserves the unit, this follows from \cite[Theorem b]{Strong2026}.
\end{proof}
In contrast with the previous adjunction, generally $C \otimes D \not \simeq C \times D$, and therefore we cannot approximate $\lefttwo$ via applying the identity functor to each hom-object.\footnote{The interested reader can see Section \ref{appendix:global-strictification-description} for a somewhat explicit description of the functor $\lefttwo$, though we do will not use this description in any of our arguments.} 

\subsection{The strictification adjunction on $\dg$-track categories}
We recall the following:
\begin{theorem}[\cite{Strong2026}]\label{thm:dgbar-is-quillen}
There exists a Quillen adjunction
\adjunctiondiagram{\dgbarconstruction}{\trackdgcataug}{\trackchaincat}{\dgbaradjoint}
Where each functor is the identity on the object set, and the right adjoint $\dgbaradjoint$ is given by taking the categorical product with the unit for $\otimes$, $\mathbb{Z}[0]_\bullet$.
\end{theorem}
Understanding the left adjoint $\dgbarconstruction$ will be a key part of proving our main theorem. 

\subsection{The functors $\catchainification$ and $\catchainificationaug$}
In \cite[Section 7.4]{Brown_Higgins_Sivera_2011} there is constructed an adjunction
\adjunctiondiagram{\chainification}{\crcom}{\chgpd}{\crossification}
It is straightforward to check that $\chainification$ sends the generating (acyclic) cofibrations of $\crcom$ to (acyclic) cofibrations,
and thus this is a Quillen adjunction. 
It is shown in \cite[Section 9.5.iii]{Brown_Higgins_Sivera_2011} that $\chainification$ is strong monoidal. 
It follows that the left adjoint on slice categories $\chainification_\bullet: \crcom/\bullet \to \chaug$ is also strong monoidal. Since the unit $\bullet$ is also the terminal object of $\crcom$, we therefore obtain:
\begin{theorem}
There exists a Quillen adjunction
\adjunctiondiagram{\catchainificationaug}{\tensorcrossedcat}{\trackdgcataug}{\catcrossificationaug}
In which both functors are identity on object sets, and the left adjoint is given by applying $\chainification_\bullet$ locally.
\end{theorem}
Our cartesian version is more complex: since $\crossification: \chgpd \to \crcom$ is a right adjoint, it is automatically lax monoidal with respect to the cartesian monoidal structures, and hence there is an induced adjunction between categories of (cartesian) enriched categories. It is not immediately obvious, however, that it is Quillen. Fortunately, this is easy to check:
\begin{theorem}
There exists a Quillen adjunction
\adjunctiondiagram{\catfunctor{\chainification}}{\cartcrossedcat}{\trackchaincat}{\catfunctor{\crossification}}
In which both functors are identity on object sets, and the right adjoint is given by applying $\crossification$ locally.
\end{theorem}
\begin{proof}
It suffices to show that $\catcrossification: \trackchaincat \to \cartcrossedcat$ preserves fibrations and acyclic fibrations. Since acyclic fibrations are the local acyclic fibrations which are surjective on objects, and $\catcrossification$ is defined by applying $\crossification$ locally, preservation of acyclic fibrations is immediate. 
By \cite{BergerMoerdijk2012}, the fibrations in each model category are the local fibrations which have the RLP against any single interval in the sense of \cite[Definition 1.11]{BergerMoerdijk2012} (we need only take a single interval because every object in each model category is locally fibrant and hence fibrant).
Preservation of local fibrations is immediate; so it suffices to show that the left adjoint $\catchainification$ sends \textit{a single} interval to an interval. This now follows from commutativity of the diagram \ref{diagram:enriched-functors}, along with the fact that the other adjunctions in the middle square are Quillen.
\end{proof}
We shall need the following fact towards our main theorem.
\begin{theorem}\label{thm:catfunctor-chainification-reflects-weak-equivalences}
$\catchainificationaug: \tensorcrossedcat \to \trackdgcataug$ reflects weak equivalences. 
\end{theorem}
\begin{proof}
As we have noted, $\catchainificationaug$ is given by applying $\chainification$ locally.
Since $\chainification$ is conservative, the result follows. 
\end{proof}

\subsection{The adjunctions with $\gpdcat$}
The simplest adjunction is the lower-right adjunction of Diagram \ref{diagram:enriched-functors}. 
The functor $\Pi_1: \chgpd \to \gpd$ given by forgetting the chain complex part is strong monoidal with respect to the cartesian monoidal structures on both categories, and hence the functor $\hotwo$ is given by applying $\Pi_1$ locally.
\begin{theorem}
The functor $\hotwo: \trackchaincat \to \gpdcat$ is left Quillen. 
\end{theorem}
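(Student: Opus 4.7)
The plan is to invoke the classical result that a strong monoidal Quillen adjunction between monoidal model categories induces a Quillen adjunction on categories of enriched categories via change of enrichment (Theorem 1.4 of \cite{Muro2012} in the strong monoidal case). The paragraph preceding the theorem already records that each of $\Pi_1: \sset \to \grpd$, $\Pi_1: \crcom^\otimes \to \grpd$, and $\Pi_1: \crcom^\times \to \grpd$ is strong monoidal and that consequently each $\hotwo$ is computed by applying $\Pi_1$ hom-objectwise. Thus the problem reduces to checking that each underlying $\Pi_1$ is left Quillen.

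For $\Pi_1: \sset \to \grpd$ this is the classical Quillen adjunction between simplicial sets and groupoids (realizing the fundamental groupoid). For $\Pi_1: \crcom \to \grpd$ (which does not depend on the choice of monoidal structure on $\crcom$), the cleanest verification is on the generating cofibrations and generating acyclic cofibrations given by Theorem \ref{thm:convenient-description-of-model-structure-on-crcom}. Under $\Pi_1 = \on{coker}(\delta_2)$, the generating cofibration $\mathbb{S}^{n-1} \hookrightarrow \mathbb{D}^n$ becomes the inclusion $\emptyset \hookrightarrow \bullet$ for $n=0$, the inclusion of a discrete two-object groupoid into the walking isomorphism for $n=1$, the collapse of the one-object groupoid with automorphism group $\mathbb{Z}$ onto a point for $n=2$, and an identity between points for $n \geq 3$; each of these is injective on objects and hence a cofibration in the folk model structure on $\grpd$. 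Similarly, the generating acyclic cofibration $\bullet \hookrightarrow \mathbb{D}^n$ becomes the standard acyclic cofibration $\bullet \hookrightarrow$ (walking isomorphism) for $n=1$ and an identity for $n \geq 2$. Hence $\Pi_1$ is left Quillen in all three cases.

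Combined with strong monoidality, Theorem 1.4 of \cite{Muro2012} now yields that each $\hotwo$ is left Quillen. The main potential obstacle, if one preferred a direct verification, would be to check preservation of the non-local generating acyclic cofibrations $\theta^\otimes_\I$ and $\theta^\times_\I$ of Theorems \ref{thm:tensorcrossedcat-generating-acyclic-cofibrations} and \ref{thm:cartcrossedcat-generating-acyclic-cofibrations}, together with the Bergner interval generator for $\ssetcat$. This is immediate from strong monoidality: the entire construction of $\tilde{\I}$ from $\I$ carried out in Section \ref{section:model-structures} commutes with $\hotwo$, so that $\hotwo(\theta^\otimes_\I)$ and $\hotwo(\theta^\times_\I)$ are precisely the analogous interval-based generating acyclic cofibrations of $\grpdcat$ built from the generating $\grpd$-interval $\Pi_1(\I)$, and hence acyclic cofibrations in $\grpdcat$.
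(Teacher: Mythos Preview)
Your proposal is correct and takes essentially the same approach as the paper: both invoke Theorem 1.4 of \cite{Muro2012}. The paper's proof is in fact the single sentence ``This is again a direct application of Theorem 1.4 of \cite{Muro2012},'' so your additional verification that each $\Pi_1$ is left Quillen on the generating (acyclic) cofibrations, and your remark about the interval generators, simply spell out details the paper leaves implicit.
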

\begin{proof}
This is again a direct application of Theorem 1.4 of \cite{Muro2012}. 
\end{proof}
We note here that we will abuse notation by also referring to the two functors $\sset \to \gpd$ and $\crcom \to \gpd$ (given by composing left adjoints in Diagram \ref{diagram:unenriched-functors}) as $\Pi_1$, and similarly denoting any compositions of left adjoints in Diagram \ref{diagram:enriched-functors} with target $\gpdcat$ as $\hotwo$. 
We further note that \textit{all} of these functors $\Pi_1$ are strong monoidal with respect to any of the monoidal structures we are considering here, so that \textit{all} of the functors $\hotwo$ are given by applying $\Pi_1$ locally. In each case we think of this as the ``homotopy $(2,1)$-category'' functor. 
Note that what we are calling $\Pi_1: \sset \to \gpd$ (the composition $\sset \to \crcom \to \chgpd \to \gpd$) coincides with the ordinary fundamental groupoid functor;
this reflects the fact that strictification (and the intermediate semi-strictification $(\infty,1)\cat \to \tensorcrossedcat$) preserves not just the homotopy category but the homotopy $(2,1)$-category.

\section{Analysis of the functor $\dgbarconstruction$}\label{section:dgbar-analysis}
In this section we analyze the functor $\dgbarconstruction$. In particular, we will show that it is conservative on certain objects, in order to next section finally conclude with our main theorem: that the strictification of $(\infty,1)$-categories is conservative.
\par
For $\V$ any of our monoidal model categories, recall that the generating cofibrations of $\vcat$ are given by $\sus I$ where $I$ is the set of generating cofibrations of $\V$, along with the inclusion of the empty category into the initial monoid of $\V$ (thought of as a category with one object).
\begin{definition} 
For $\V \in \{\crcom^\otimes, \crcom^\times, \chaug^\otimes, \ch^\times\}$, we shall call a morphism of $\vcat$ \emph{cellular} if it is cellular with respect to the generating cofibrations of $\vcat$. 
We shall call an object cellular if the morphism from the initial object is a cellular morphism.
\end{definition}

\begin{definition}\label{dfn:word-length-cofiltration-category}
Let $\C \in \trackdgcataug$ be cellular. For objects $x,y \in \ob \C$, define a $\Pi_1\C[x,y]$-chain complex by
$E_n \C[x,y] := \{$chains which can be written as a tensor product (composition) of at least $n$ generating chains in dimension $>0\}$.
Define the $\Pi_1 \C[x,y]$ chain complex by $T_n \C[x,y]$ by $\C[x,y]/E_{n+1}[x,y]$, and define the \emph{word length cofiltration} of $\C[x,y]$ by 
\begin{center}
\begin{tikzcd}
		& \cdots \ar[r] 
		& T_n(\C[x,y]) \ar[r] 
		& \cdots \ar[r] 
		& T_2(\C[x,y]) \ar[r]
		& T_1(\C[x,y]) \ar[r] 
		& T_0(\C[x,y]) \cong \mathbf{0}
\end{tikzcd}
\end{center}
Define the \textit{complex of pure length $n$-words} by 
$$P_n\C[x,y] = \ker (T_n\C[x,y] \to T_{n-1}\C[x,y])$$
where this kernel is taken in the category of $\Pi_1 \C[x,y]$ chain complexes. 
\end{definition}
Note that by a dimension argument, the worth length cofiltration converges to $\C[x,y]$.
\begin{definition}\label{dfn:funny-tensor-power}
Fix an indexing set $S$. 
For $n \ge 2$, define a functor $\funnytensor{n}: \trackchaincat_S \to \trackchaingraph_S$ by 
$$\funnytensor{n}(\C)[x,y] = \frac{\coprod_{a_1,...,a_{n-1} \in \ob \C} p_*\left(\C[x,a_1] \otimes \C[a_1,a_2] \otimes \cdots \otimes \C[a_{n-1},y]\right)}{I_\C[x,y]}
$$
where:
\begin{itemize}
\item $p_*$ is the pushforward of groupoid chain complexes along the map of groupoids 
$$[\Pi_1\C[x,a_1] \times \Pi_1\C[a_1,a_2] \times \cdots \times \Pi_1\C[a_{n-1},y] \to \Pi_1\C[x,y]$$
coming from composition in $\C$.
\item $I_\C[x,y]$ is the subcomplex generated by differences
\begin{align*}
&\alpha_1 \otimes \cdots \otimes (\alpha_i,0) \otimes \alpha_{i+1} \otimes \cdots \otimes \alpha_n 
\\
&\; -
\alpha_1 \otimes \cdots \otimes \alpha_i \otimes (0, \alpha_{i+1}) \otimes \cdots \otimes \alpha_n
\end{align*}
\end{itemize}
Where by $(\alpha, 0)$ we mean the image of $\alpha$ along a composition map 
$$
\C[x,y] \hookrightarrow
\C[x,y] \times \C[y,z] \to \C[x,z]$$
where the first map is specified by picking a point in $\C[y,z]$.
Similarly for $(0, \alpha_{i+1})$.
\end{definition}
\begin{theorem}\label{thm:funnytensor-is-pure-tensors}
Let $\D$ be a cellular object of $\tensorcrossedcat$, and set $\C = \catchainificationaug\D$. Then for each $n$ there is a natural transformation 
$$\psi_n : P_n  \C[x,y] \to \funnytensor{n}\dgbarconstruction\left(\C \right)$$
which is an isomorphism modulo the $0$-chains.
\end{theorem}
\begin{proof}
Note that both chain complexes have (up to natural isomorphism) the same groupoid of equivariance, so the groupoid part of natural transformation we can let be this isomorphism.
\par 
$\C[x,y]$ is free (as a chain complex over $\Pi_1\C[x,y]$) on simple tensors, i.e. symbols
$$\alpha_1 \otimes \cdots \otimes \alpha_n$$
where each $\alpha_i$ is a generator adjoined via the description of $\D$ and hence $\C$ as a cellular object.
So it suffices to define $\psi_n$ on the simple tensors. Such an object is of the form:
$$p_1 \otimes \cdots \otimes \alpha_1 \otimes p_i \otimes \cdots \otimes \alpha_2 \otimes p_k \otimes \cdots \otimes \alpha_n \otimes \cdots \otimes p_\ell $$
where $p_1, \dots, p_\ell$ are generating $0$-chains, and $\alpha_1, \dots, \alpha_n$ are generating chains of dimension strictly greater than $0$.
We define $\psi_n$ by sending this to
$$(0, \cdots,  \alpha_1) \otimes (0,\cdots, \alpha_1) \otimes \cdots (0, \cdots, \alpha_n, 0, \cdots, 0) $$
where our notation is as in Definition \ref{dfn:funny-tensor-power}.
This has a left inverse, so it is injective. To show surjectivity after quotienting by the $0$-chains, we need to show that any positive-dimension simple tensor in
$$\coprod_{a_1,...,a_{n-1} \in \ob \C} p_*\left(\C[x,a_1] \otimes \C[a_1,a_2] \otimes \cdots \otimes \C[a_{n-1},y]\right)$$
is in the image of $\psi_n$, up to an element of $I_\C[x,y]$.
\par 
$\dgbarconstruction(\C)[x,y]$ is generated (as a chain complex over $\Pi_1\C[x,y]$) by compositions of generating chains $\alpha_1,\cdots,\alpha_n$ which are all of the same dimension. We write such a composition as $(\alpha_1,\cdots,\alpha_n)$, consistent with our usage of this notation above. 
Now, we have that 
$$(\alpha_1, \cdots , \alpha_n)
= (\alpha_1, 0, \cdots, 0) + (0, \alpha_2, 0,\cdots, 0) + \cdots + (0,\cdots, 0, \alpha_n)$$
It then follows (by bilinearity of the tensor product) that any element of the coproduct above can be written as a sum of elements
$$
\vec{x}_1 \otimes \cdots \otimes \vec{x}_n
$$
Where each $\vec{x}_i$ is a composition $(\alpha_1,\cdots,\alpha_n)$ where exactly one of the entries is nonzero.
Up to the relations imposed by $I_\C[x,y]$, we can assume that each $\vec{x}_i$ is of the form $(0,\cdots,0,\alpha_i)$
except possibly $\vec{x}_n$, which we can only assume is of the form $(0, \cdots,0,\alpha_n, 0, \cdots, 0)$.
Thus, the (degree nonzero) elements in the tensor product $\funnytensor{n}\dgbarconstruction\left(\C \right)$ are sums of elements the form
$$(0, \cdots,  \alpha_1) \otimes (0,\cdots, \alpha_1) \otimes \cdots (0, \cdots, \alpha_n, 0, \cdots, 0) $$
as desired.
\end{proof}
\begin{lemma}\label{lem:funnytensor-homotopical}
Let $S$ be a set, and $f: \C \to \D$ be a cellular map between cellular objects in $\trackchaincat_S$.
If $f$ is a weak equivalence, then so is $\funnytensor{n}(f)$. 
\end{lemma}
\begin{proof}
The main difficulty is proving that $f$ induces a weak equivalence between the subcomplexes $I_\C[x,y] \to I_\D[x,y]$ being quotiented by in the definition of $\funnytensor{n}$. 
\par 
First, note that because every object of $\trackchaincat_S$ is fibrant, and $f$ is a cofibration, there exists $g: \D \to \C$ such that 
$g \circ f = \id_{\C}$ 
and hence by uniqueness of inverses $f \circ g$ is equivalent to $\id_\D$ in the homotopy category of $\trackchaincat_S$. 
Consider the cylinder object for $\D$, $\D \otimes \mathbb{I}_*$, for which 
$$(\D \otimes \I_*)[x,y] = \D[x,y] \otimes \mathbb{I}_*$$
where $\mathbb{I}_*$ is the groupoid chain complex with trivial (terminal) groupoid, and with two generators $p_0$ and $p_1$ in dimension $0$, and one generator $I$ in dimension $1$ with $\del(I) = p_0 - p_1$. 
This yields a well-defined enriched category because tensor product of chain complexes distributes over cartesian product. 
The two inclusions are given by including each chain $\alpha$ as either $\alpha \otimes p_0$ or $\alpha \otimes p_1$. 
\par 
We will show that there is a left homotopy $H: f \circ g \Rightarrow \id_\D$ witnessed by $\D \otimes \I_*$. 
Note that despite bifibrancy of $\D$, this is not automatic because $\D \otimes \I_*$ is not a good cylinder object. 
So, consider the \textit{good} cylinder object $\Cyl(\D)$ given by 
$$\Cyl(\D)[x,y] = \D[x,y] \otimes \I_\to$$
where $\I_\to$ has the same chain complex as $\I$, but the underlying groupoid is the walking isomorphism.
Equivalently, $\Cyl(\D)$ has the same chain complex parts as $\D \otimes \I_*$, but the groupoids of the hom-objects are given by taking the product of the groupoids of the hom-objects of $\D$ with the walking isomorphism. 
Since $\Cyl(\D)$ is a good cylinder object (indeed, the map $\D \coprod \D \to \Cyl(\D)$ is easily seen to be cellular), there is a homotopy $f \circ g \Rightarrow \id_\D$ witnessed by $\tilde{H}:  \Cyl(\D) \to \D$. 
Now consider the diagram 
\begin{center}
\begin{tikzcd}
\D \otimes (\mathbb{Z}[0] \oplus \mathbb{Z}[0])_* 
	\ar[d, hook] \ar[r]
& \Cyl(\D) \ar[d, two heads, "{\sim}" labl, swap]
\\
\D \otimes \I_* \ar[r, two heads, "\sim "]
& \D
\end{tikzcd}
\end{center}
%%%In which the arrow $\psi$ is given as follows: denote by $\ell: a \to b$ the unique one-cell of the walking isomorphism groupoid $L$. For objects $x,y \in S$ and a point $q \in \Pi_1\D[x,y]$, we have that $\tilde{H}$ sends the $1$-cell $q \times \ell \in \Pi_1 \Cyl(\D)[x,y]$ to some automorphism of $q$. Denote this by $\eta(q)$. 
%%%We then define $\psi$ by sending the groupoid part via the inclusion 
%%%$$ \Pi_1(\D \otimes (\mathbb{Z}[0] \oplus \mathbb{Z}[0])_*)[x,y] 
%%%= \Pi_1(\D)[x,y]
%%%= \Pi_1(\D)[x,y] \times \bullet 
%%%\hookrightarrow 
%%%\Pi_1(\D)[x,y] \times L 
%%%= \Pi_1(\Cyl(\D))[x,y]
%%%$$
%%%and sending the chains $\alpha \otimes p_0$ along the obvious inclusion, and sending the chains $\alpha \otimes p_1$
%%%to at a point $q$ to
%%%$$\left(\alpha \otimes p_1\right)^{(q \times \ell)^{-1}(\eta(q) \times a)}$$ 
%%%it then follows by definition of $\eta(q)$ that the diagram commutes.
Since $\tilde{H}$ is an acylic fibration and $\D \otimes(\mathbb{Z}[0] \oplus \mathbb{Z}[0])_* \hookrightarrow \D \otimes \I_*$ is cellular (and in particular a cofibration), this diagram has a lift, and the resulting composite
\begin{center}
\begin{tikzcd}
	\D \otimes \I_* \ar[r]
	& \Cyl(\D) \ar[r, "\tilde{H}"] & \D
\end{tikzcd}
\end{center}
gives the desired homotopy witnessed by $\D \otimes \I_*$. Call this homotopy $H$; it yields in particular for any $x$ and $y$ a homotopy $\D[x,y] \to \left(D[x,y]\right)[-1]$ which we also call $H$.
\par 	
This gives us a homotopy $H: \D[x,y] \to \left(D[x,y]\right)[-1]$ between $f \circ g$ and $\id_\D$ to prove that the map $I_\C[x,y] \to I_\D[x,y]$ (definition \ref{dfn:funny-tensor-power}) induced by $f$ is a weak equivalence. 
Again, since $g$ is a left inverse for $f$, we need only construct a chain homotopy between the identity on $I_\D[x,y]$ and 
$f \circ g$ restricted to $I_\D[x,y]$.
We define this chain homotopy $I_\D[x,y] \to \left(I_\D[x,y]\right)[-1]$ explicitly on the generators of $I_\D[x,y]$ via
\begin{align*}
& H(\alpha_1 \otimes \cdots \otimes (\alpha_i,0) \otimes \alpha_{i+1} \otimes \cdots \otimes \alpha_n 
-
\alpha_1 \otimes \cdots \otimes \alpha_i \otimes (0, \alpha_{i+1}) \otimes \cdots \otimes \alpha_n) 
\\
&=
\big(
H(\alpha_1) \otimes \cdots \otimes (\alpha_i,0) \otimes \alpha_{i+1} \otimes \cdots \otimes \alpha_n
\\ 
& \quad \quad 
- H(\alpha_1) \otimes \cdots \otimes \alpha_i \otimes (0, \alpha_{i+1}) \otimes \cdots \otimes \alpha_n 
\big)
\\
&\pm
\big(
f(g(\alpha_1)) \otimes H(\alpha_2) \otimes\cdots \otimes (\alpha_i,0) \otimes \alpha_{i+1} \otimes \cdots \otimes \alpha_n 
\\
& \quad \quad - f(g(\alpha_1)) \otimes H(\alpha_2) \otimes  \cdots \otimes \alpha_i \otimes (0, \alpha_{i+1}) \otimes \cdots \otimes \alpha_n
\big)
\\
&\pm
\cdots 
\\
&\pm
\big(
f(g(\alpha_1)) \otimes \cdots \otimes (f(g(\alpha_i)),0) \otimes f(g(\alpha_{i+1})) \otimes \cdots \otimes H\alpha_n)) 
\\ 
& \quad \quad 
- f(g(\alpha_1)) \otimes\cdots \otimes f(g(\alpha_i)) \otimes (0, f(g(\alpha_{i+1}))) \otimes \cdots \otimes f(g(\alpha_n))
\big)
\end{align*}
(i.e., the standard definition of the chain homotopy induced by tensor product of chain homotopies), where the sign of each term (i.e. the $\pm$) is determined by the sign rule. 
Note that this is a well defined element of $I_{\D}[x,y]$, because the fact that $H$ comes from a map in $\trackchaincat$ means that $H(\alpha_i, 0) = (H({\alpha_i}), 0)$. 
\par 
Thus, it follows that the map $I_\C[x,y] \to I_\D[x,y]$ induced by $f$ is a homology equivalence. The main claim follows by analyzing the map between long exact sequences induced by the map of short exact sequences
\begin{center}
\begin{tikzcd}
I_\C[x,y] \ar[r] \ar[d]
	& 
	\coprod_{a_1,...,a_{n-1} \in J} p_*\C[x,a_1] \otimes \C[a_1,a_2] \otimes \cdots \otimes \C[a_{n-1},y] 
	\ar[r] \ar[d]
	&
	\funnytensor{n}(\C)[x,y]
	\ar[d]
	\\
I_\D[x,y] \ar[r] 
	& 
	\coprod_{a_1,...,a_{n-1} \in J} p_*\D[x,a_1] \otimes \D[a_1,a_2] \otimes \cdots \otimes \D[a_{n-1},y] 
	\ar[r]
	&
	\funnytensor{n}(\D)[x,y]
\end{tikzcd}
\end{center}
We have just shown the left vertical arrow is a homology equivalence, and the middle vertical arrow is a homology equivalence by cofibrancy of the hom-objects and the fact that $\otimes$ and $p_*$ are left Quillen functors.
Thus, the right vertical arrow is a homology equivalence, as desired.
\end{proof}
\begin{theorem}\label{thm:dg-bar-construction-sometimes-conservative}
Fix a set $S$. Let $\C$ and $\D$ be cellular objects of $\trackdgcat_S$ which are in the image of $\left(\catchainificationaug\right)_S$.
Let $f: \C \to \D$ be a cellular map in $\trackdgcataug$ which induces an isomorphism $\hotwo(f): \hotwo(\C) \to \hotwo(\D)$. 
Then if $\dgbarconstruction(f)$ is a weak equivalence, so is $f$.
\end{theorem}
\begin{proof}
Since $\C$ and $\D$ are in the image of $\catchainificationaug$ and $f$ is cellular, the condition that $\hotwo(f)$ is an isomorphism implies that $f$ induces an equivalence of categories. Since it also implies that $f$ induces local equivalences on $\Pi_1$, it suffices to show that $f$ induces local homology equivalences. The map $f: \C \to \D$ induces a map between their word length cofiltrations. 
Since the cofiltrations clearly converge (by a dimension argument), it suffices to show that the induced maps $P_n(\C)[x,y] \to P_n(\D)[x,y]$ are homology equivalences.
The result now follows from Lemma \ref{lem:funnytensor-homotopical} and Theorem \ref{thm:funnytensor-is-pure-tensors}, noting that because $\C$ and $\D$ are in the image of $\catfunctor{\chainification}$ they have isomorphic subcomplexes of $0$-chains, and thus we can quotient by this shared subcomplex without changing whether our induced maps are homology equivalences.
\end{proof}

\section{Proof of conservativity}\label{section:conservativity}
In this section, we will prove our main theorem. First, we will need a couple more technical lemmas:
\begin{lemma}\label{lem:cellular-adjoin-at-chosen-point}
Let $S$ be a set, and $f: \C \to \D$ be a cellular morphism in $\tensorcrossedcat_S$. 
For each connected component of each hom object $\C[x,y]$, make a choice of object $p$. 
Then we may assume that each of the adjoints to the attaching maps of the form $\sus \mathbb{S}^{n} \to \C$ map the unique point of $\mathbb{S}^n$ to one of our choices of points (for $n > 0$).
\end{lemma}
\begin{proof}
Let $f: \mathbb{S}^{n} \to \C[x,y]$ be adjoint to the attaching map.  Let $\bullet$ be the unique point of $\mathbb{S}^n$, and $\ell: p \to \bullet$ any element of $\C[x,y]_1$. 
Then we can define an isomorphism between the pushouts
\begin{center}
\begin{tikzcd}
\mathbb{S}^{n} \ar[r, "f"] \ar[d]
	& \C[x,y] \ar[d]\\
\mathbb{D}^{n+1} \ar[r] 
	& P
\end{tikzcd}
\quad \quad
\begin{tikzcd}
\mathbb{S}^{n} \ar[r, "f^{\ell}"] \ar[d]
	& \C[x,y] \ar[d]\\
\mathbb{D}^{n+1} \ar[r] 
	& P
\end{tikzcd}
\end{center}
Where $f$ is the map which sends the unique point of $\mathbb{S}^1$ to $p$, and the unique $n$-dimensional generator to its action under $\ell$. 
\end{proof}

\begin{lemma}\label{lem:cellular-tensorcrossed-same-points}
Let $S$ be a set, and $f: \C \to \D$ be a morphism in $\tensorcrossedcat_S$ such that:
\begin{itemize}
\item $f$ is cellular with respect to the generating cofibrations of $\tensorcrossedcat_S$, and 
\item $\hotwo(f)$ is a weak equivalence
\end{itemize}
Then there is a $\tilde{\D} \in \tensorcrossedcat_S$ and maps $\tilde{f}: \C \to \tilde{\D}$ and $\iota: \tilde{\D} \to \D$ such that:
\begin{itemize}
\item $\tilde{f}$ is cellular with respect to the set of generating cofibrations of $\tensorcrossedcat_S$, minus the generating cofibration induced by addition of a $0$-cell in a hom-object, and 
\item $\iota$ is cellular with respect to the generating acylic cofibrations which adjoin a $0$-cell and additional $1$-cell. 
\item $f = \tilde{f} \circ \iota$.
\end{itemize}
\end{lemma}
\begin{proof}
By cellularity of $f$, we may write it as an iterated pushout along maps $\sus i_n$ where $i_n: \mathbb{S}^{n-1} \to \mathbb{D}^n$ is a generating cofibration of $\crcom$. Fix some well-ordering $\mathcal{O}$ of the pushouts which adjoin a $1$-cell (i.e. the pushouts along maps $\sus {i_1}$).
 Let $A$ be the set of points adjoined by the pushouts along $\sus i_0$; i.e. $A$ is the set of atomic $0$-cells of the underlying category of $\D$ which are not in $\C$. 
Since $\hotwo(f)$ is a weak equivalence, it is in particular locally $\pi_0$-surjective.
Order $A$ according to the rule $p < q$ if there is some stage of $\mathcal{O}$ in which $p$ is in the image of $\pi_0$ but $q$ is not. 
Rearrange $\mathcal{O}$ so that it consists first of all those elements which affect the above rule, follows by all those which do not.
Then we can replace this first segment of $\mathcal{O}$, along with all of the maps which adjoin the elements of $A$, with acyclic cofibrations adjoining a $0$-cell and $1$-cell, using a technique similar to the proof of Lemma \ref{lem:cellular-adjoin-at-chosen-point}.
\end{proof}

We can now show that the functor $\tensorcrossedcat \to \cartcrossedcat$ reflects weak equivalences between cofibrant objects, from which our main theorem follows.
\begin{theorem}\label{thm:stglobal-conservative}
The functor $\lefttwo: \tensorcrossedcat \to \cartcrossedcat$ reflects weak equivalences between cofibrant objects.
\end{theorem}
\begin{proof}
Firstly, note that if $\lefttwo(f)$ is a weak equivalence, then it is an equivalence on homotopy categories, and therefore so is $f$ (since $\lefttwo$ preserves homotopy categories).
Define a $\C^{\text{Mor}(f)} \in \tensorcrossedcat$ which has the same objects as $\C$ and such that 
$$\C^{\text{Mor}(f)}[x,y] := \D[fx,fy]$$
With composition maps given by the composition maps in $\D$. Then $f$ factors as 
\begin{center}
\begin{tikzcd}
\C \ar[r] & \C^{\text{Mor}(f)} \ar[r, "{\sim}"] & \D
\end{tikzcd}
\end{center}
Where the second map is a weak equivalence. Now, factor the first maps as a cellular map followed by an acylic fibration in $\tensorcrossedcat_{\ob\C}$ to get 
\begin{center}
\begin{tikzcd}
\C \ar[r, hook] & Q\C^{\text{Mor}}(f) \ar[r, "\sim ", two heads] & \C^{\text{Mor}(f)} \ar[r, "{\sim}"] & \D
\end{tikzcd}
\end{center}
Since $\C$ is cofibrant, so is $Q \C^{\text{Mor}(f)}$.
Thus, applying $\lefttwo$, we get 
\begin{center}
\begin{tikzcd}
\lefttwo\C \ar[r, hook] & \lefttwo Q\C^{\text{Mor}}(f) \ar[r, "\sim "] & \lefttwo\D
\end{tikzcd}
\end{center}
We have thus reduced to showing that $\lefttwo$ reflects weak equivalences in the special case that $f$ is identity-on-objects, by the standard small object argument trick we can reduce to the case where the map $f$ is not just a cofibration but cellular with respect to the generating cofibrations of $\tensorcrossedcat_{\ob\C}$. 
\par 
Now we further reduce to the case where $f$ is given by adjoining only cells of dimension $1$ and higher, by Lemma \ref{lem:cellular-tensorcrossed-same-points}.
Hence we can assume that $\hotwo(f)$ is not just a weak equivalence, but an isomorphism. 
By the fact that $\catchainificationaug: \tensorcrossedcat \to \trackdgcataug$ reflects weak equivalences (Theorem \ref{thm:catfunctor-chainification-reflects-weak-equivalences}), it suffices to verify that $\catchainification(f)$ is a weak equivalence. 
This now follows from commutativity of Diagram \ref{diagram:enriched-functors} and Theorem \ref{thm:dg-bar-construction-sometimes-conservative}.
\end{proof}
Now our main result follows readily:
\begin{theorem}[Theorem \ref{alphathm:infinity-one-conservativity}]\label{thm:infinity-one-conservativity-precisely}
The composition $\lefttwo \circ \leftone$ reflects weak equivalences between cofibrant objects.
\end{theorem}
\begin{proof}
We have just proved that $\lefttwo$ reflects weak equivalences between cofibrant objects.
$\leftone$ reflects weak equivalences between cofibrant objects by Lemma \ref{lem:local-strictification-is-locally-strictification} and the fact that $\st_0: \sset \to \crcom$ reflects weak equivalences. 
Since $\leftone$ preserves cofibrant objects, their composition therefore reflects weak equivalences between cofibrant objects.
\end{proof}
\begin{remark}
	Our main theorem can be seen as a direct generalization of the conservativity of strictification of $\infty$-groupoids (Theorem \ref{thm:strictification-infinity-groupoids-conservative}) as follows:
	let $X$ be a simplicial set; then the $\sset$-enriched category $\sus X$ is cofibrant. 
	$\st_1(\sus X)$ is isomorphic to $\sus(\st_0(X))$ by \cite[Lemma 7.3]{Strong2026}.
	Since a map $f:X \to Y$ of simplicial sets is a weak equivalence iff $\sus f: \sus X \to \sus Y$ is a weak equivalence in $\ssetcat$, it follows that for two simplicial sets $X$ and $Y$, $f: X \to Y$ is a weak equivalence iff the map $\st_0(F): \st_0 (X) \to \st_0(Y)$ is a weak equivalence. 
\end{remark}
Of course, we have really proved something slightly stronger than Theorem \ref{thm:infinity-one-conservativity-precisely}:
\begin{theorem}
	The composition of functors $ \catchainification \circ \lefttwo \circ \leftone$ reflects weak equivalences between cofibrant objects.
\end{theorem}
This functor, which takes an $(\infty,1)$-category to a track chain-category, should be thought of as a categorification of the functor which takes a space to its groupoid-equivariant chain complex given by the system of chains on the universal covers, acted on by its fundamental groupoid.

\printbibliography
\appendix
\newpage
\section{The monoid axiom in $\crcom^\times$}\label{appendix:the-monoid-axiom}
In this appendix we will prove that the monoid axiom of \cite{SchwedeShipley1998} holds for the monoidal model category $\crcom^\times$.
The proof will proceed with the same techniques as in \cite{AraLucasFolkModelMonoidal} in which the analagous statement is proved for the Gray tensor product of $\omega$-categories. Of course, our case is easier to describe, since $\omega$-groupoids (and thus, crossed complexes) are simpler objects than general $\omega$-categories. 
Some of the results in this section could be proved by using the lax structure of the functor $\Id: \crcom^\times \to \crcom^\otimes$ and appealing to results in \cite{AraLucasFolkModelMonoidal}, but we prove them directly, as they are all short and the independent proofs are (we think) of some interest, as they reflect doing enriched category theory in the $(\omega,1)$-category of $\omega$-groupoids, as opposed to the $\crcom^\otimes$-category of $\omega$-groupoids. 
\par 
Throughout this section, let $\iso$ denote the crossed complex with underlying groupoid consisting of two objects $0$ and $1$ and a single nonidentity isomorphism, and all higher groups trivial. This is the same as what we have called $\mathbb{D}^1$ in our description of the model structure on $\crcom$ (Theorem \ref{thm:convenient-description-of-model-structure-on-crcom}), but we change notation in this section to match better with the notation in \cite{AraLucasFolkModelMonoidal}. We will denote by $\pi: \iso \to \bullet$ the unique map to the terminal object. We will denote by $\partial \iso$ the ``boundary'' of $\iso$; i.e. the crossed complex $\bullet \coprod \bullet$, which has an inclusion given by the two objects of $\iso$, $0$ and $1$.
\begin{definition}
Let $f,g: X \to Y$ be maps of crossed complexes. A \emph{$\iso$-tranformation} from $f$ to $g$, written $h: f \Rightarrow g$, is a map of crossed complexes $h: \iso \times X \to Y$ making the diagram 
\begin{center}
\begin{tikzcd}
X  \ar[rd, "0 \times \id_X", swap] \ar[rrd, "f", bend left = 20]
\\
& \iso \times X \ar[r, "h"]
	& Y 
\\
X \ar[ru, "1 \times \id_X"] \ar[rru, "g", bend right = 20, swap]
\end{tikzcd}
\end{center}
commute. We write $h: f \Rightarrow g$ to denote that $h$ is a $\iso$-transformation from $f$ to $g$.
\end{definition}
\begin{lemma}\label{lem:iso-transforms-induce-isos-of-maps}
If $f,g: X \to Y$ are maps of crossed complexes, and there exists an $\iso$-transformation $h: f \Rightarrow g$, then $f$ and $g$ induce isomorphic maps on $\pi_n$, $n \ge 0$.
\end{lemma}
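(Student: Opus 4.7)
The plan is to show that for each basepoint $x \in X_0$, the transformation $h$ supplies a canonical $1$-cell $\gamma_x := h(\iota,\id_x) \in Y_1$, where $\iota\colon 0 \to 1$ denotes the nontrivial isomorphism in $\iso$, and that this $1$-cell witnesses the isomorphism between $f_*$ and $g_*$ on homotopy groups. Specifically, $\gamma_x$ is a morphism from $f(x)=h(0,x)$ to $g(x)=h(1,x)$ in the underlying groupoid of $Y$, so conjugation $\phi_{\gamma_x}\colon \pi_n(Y,f(x)) \to \pi_n(Y,g(x))$ is an isomorphism, and the claim to prove is the identity $g_* = \phi_{\gamma_x} \circ f_*$ of homomorphisms $\pi_n(X,x) \to \pi_n(Y,g(x))$.

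The main computation takes place in $\iso \times X$. Let $i_0,i_1\colon X \to \iso\times X$ denote the two inclusions, so that $f = h\circ i_0$ and $g = h\circ i_1$. The $1$-cell $(\iota,\id_x)$ is a morphism from $(0,x)$ to $(1,x)$ in $\iso\times X$, and its action $\phi_{(\iota,\id_x)}\colon \pi_n(\iso\times X,(0,x)) \to \pi_n(\iso\times X,(1,x))$ is an isomorphism. Because the groupoid action in a cartesian product of crossed complexes is defined componentwise, and $\phi_{\id_x}$ acts as the identity, an explicit check (using that $\iso_n$ is trivial for $n \geq 2$) shows $\phi_{(\iota,\id_x)} \circ (i_0)_* = (i_1)_*$ as maps $\pi_n(X,x) \to \pi_n(\iso\times X,(1,x))$.

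The lemma then follows by naturality: any map $F\colon C\to D$ of crossed complexes satisfies $F_* \circ \phi_\ell = \phi_{F(\ell)} \circ F_*$, since $F$ preserves the groupoid action. Applying this naturality to $h$ and to $(\iota,\id_x)$ yields
\[
g_* = h_*\circ (i_1)_* = h_*\circ \phi_{(\iota,\id_x)}\circ (i_0)_* = \phi_{\gamma_x}\circ h_*\circ (i_0)_* = \phi_{\gamma_x}\circ f_*,
\]
as required. For $n=0$, the existence of $\gamma_x$ directly exhibits $f(x)$ and $g(x)$ as lying in the same connected component of $Y$, so the induced maps on $\pi_0$ already agree on the nose.

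I expect no substantive obstacle: the only nontrivial step is the componentwise identity $\phi_{(\iota,\id_x)} \circ (i_0)_* = (i_1)_*$ in $\iso\times X$, which is routine bookkeeping from the componentwise description of the cartesian product and the action in $\crcom^\times$.
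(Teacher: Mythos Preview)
Your proposal is correct and follows essentially the same approach as the paper: both arguments use the $1$-cell $h(\iota,\id_x)$ (the paper writes it as $h(\id_{x_0},\ell)$) and the key componentwise computation $\phi_{(\iota,\id_x)}(\id_0,\alpha) = (\id_1,\alpha)$ in $\iso \times X$ to produce the commuting triangle $g_* = \phi_{\gamma_x}\circ f_*$. Your version makes the naturality step $h_*\circ\phi_{(\iota,\id_x)} = \phi_{h(\iota,\id_x)}\circ h_*$ more explicit, but the substance is identical.
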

% This statement cannot be made stronger; in the pointed case there is a more natural statement that says the maps at the base points are the same
\begin{proof}
Let $\ell$ denote the nonidentity isomorphism of $\iso$. 
For any $x_0 \in X$ and any $n \ge 1$, define a map $\psi_\ell: \pi_n(Y,f(x_0)) \to \pi_n(Y, g(x_0))$ by 
$$\psi_\ell(\alpha) = \phi_{h(\id_{x_0}, \ell)}(\alpha)$$
where $\phi_{h(\ell)}$ is the action of the groupoid $Y_1$ on the groups $Y_n$ (for the case $n=1$, this is conjugation). Since this action commutes with the differentials, this map is well defined on the homotopy groups. 
Note that for $(\alpha,\id_0) \in (X \times I)_n((x_0,0))$, 
$$\phi_{(\id_{x_0}, \ell)}(\alpha,\id_0) = (\alpha, \id_1)$$ 
From this it follows that for $n\ge 1$ we have a commutative triangle
\begin{center}
\begin{tikzcd}
& & \pi_n(Y,f(x_0)) \ar[dd,  "\sim " {rotate=90, anchor=south}, "\psi_\ell "] \\
\pi_n(X,x_0) \ar[rru, "f_*"] \ar[rrd, "g_*", swap] \\
& & \pi_n(Y,f(x_0))
\end{tikzcd}
\end{center}
as desired. The case $n=0$ follows similarly, or as a special case where we consider $C_1$ to act on $C_0$ via $\phi_\ell(s(\ell)) = t(\ell)$.
\end{proof}
\begin{definition}\label{dfn:strong-iso-retract}
Let $i: X \to Y$ be a map of crossed complexes. We say $i$ is a \emph{$\iso$-transformation retract} if it admits a retraction $r:Y \to X$, along with a $\iso$-transformation $h: i r \Rightarrow \id_Y$. 
We furthermore say that $i$ is a \emph{strong $\iso$-transformation retract} if $r$ and $\alpha$ can be chosen such that the diagram
\begin{center}
\begin{tikzcd}
\iso \times X \ar[r, "\pi \times X "] \ar[d, "\iso \times i"]
	& X \ar[d, "i"]
\\
\iso \times Y \ar[r, "h"]
	&  Y
\end{tikzcd}
\end{center}
commutes.
\end{definition}
\begin{lemma}\label{lem:every-iso-retract-equivalence}
A $\iso$-transformation retract is a weak equivalence.
\end{lemma}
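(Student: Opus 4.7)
The plan is to verify that $i$ induces isomorphisms on $\pi_n$ at every basepoint for every $n \ge 0$, which by definition of the model structure on $\crcom$ makes $i$ a weak equivalence. The argument is the standard translation of ``homotopy equivalence implies weak equivalence'' into the language of $\iso$-transformations, and the previous lemma does essentially all the work.

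First, from the retract condition $r i = \id_X$, functoriality of $\pi_n$ immediately gives $r_* \circ i_* = \id$ on every homotopy group at every basepoint of $X$. In particular $i_*$ is a split monomorphism and $r_*$ is a split epimorphism. Second, I apply Lemma \ref{lem:iso-transforms-induce-isos-of-maps} to the $\iso$-transformation $h: i r \Rightarrow \id_Y$, viewed as a pair of parallel endomorphisms of $Y$. The lemma produces, for each $y_0 \in Y_0$ and each $n \ge 0$, an isomorphism $\psi_\ell: \pi_n(Y, iry_0) \to \pi_n(Y, y_0)$ making the triangle with $(ir)_*$ and $(\id_Y)_*$ commute. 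Since $(\id_Y)_* = \id$, this forces $(ir)_* = \psi_\ell^{-1}$, hence an isomorphism; by functoriality $(ir)_* = i_* \circ r_*$, so $i_* \circ r_*$ is an isomorphism as well. This shows $i_*$ is also split epi and $r_*$ is also split mono.

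Combining the two halves, both $i_*$ and $r_*$ are bijections on $\pi_n$ at every basepoint, so $i$ is a weak equivalence of crossed complexes.

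There is essentially no obstacle here: everything reduces to Lemma \ref{lem:iso-transforms-induce-isos-of-maps} together with functoriality, and the argument works uniformly across all $n \ge 0$ (the $n = 0$ case being handled as the special case of the previous lemma). I note that the proof uses only that $i$ is a $\iso$-transformation retract, not the strong compatibility square in Definition \ref{dfn:strong-iso-retract}; that extra condition will presumably be needed later to control pushouts and establish the monoid axiom, but it plays no role in the weak equivalence claim itself.
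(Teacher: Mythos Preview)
Your argument is correct and matches the paper's approach exactly: the paper's proof is the single sentence ``This follows from Lemma~\ref{lem:iso-transforms-induce-isos-of-maps},'' and you have simply unpacked that reference by combining functoriality of $\pi_n$ with the commutative triangle supplied by the lemma. Your closing observation that the strong compatibility condition is unused here is also correct.
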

\begin{proof}
This follows from \ref{lem:iso-transforms-induce-isos-of-maps}.
\end{proof}
\begin{lemma}\label{lem:every-acylic-cof-iso-retract}
Every acyclic cofibration in $\crcom$ is a strong $\iso$-transformation retract.
\end{lemma}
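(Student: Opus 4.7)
The plan is to reduce to the generating acyclic cofibrations and propagate the property through pushouts, transfinite compositions, and retracts. By Theorem \ref{thm:convenient-description-of-model-structure-on-crcom}, every acyclic cofibration in $\crcom$ is a retract of a transfinite composition of pushouts of the maps $j_n: \bullet \to \mathbb{D}^n$, so it suffices to establish (a) each $j_n$ is a strong $\iso$-transformation retract, and (b) the class of strong $\iso$-transformation retracts is closed under these three operations.

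For (a) I would construct the $\iso$-transformation $h_n: \iso \times \mathbb{D}^n \to \mathbb{D}^n$ explicitly. Since $\bullet$ is terminal, the retraction $r: \mathbb{D}^n \to \bullet$ is forced. I then define $h_n$ cell-by-cell using the contractibility data of $\mathbb{D}^n$---the top generator $b$ with $\del b = a$ provides a null-homotopy of $a$---and verify that the resulting map respects the boundary maps, the $C_1$-action, and the interchange relations. The strong compatibility $h_n \circ (\iso \times j_n) = j_n \circ (\pi \times \bullet)$ holds automatically because the basepoint of $\mathbb{D}^n$ admits only identity automorphisms.

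For (b), closure under retracts is routine. The interesting case is pushouts: suppose $i: X \to Y$ is a strong $\iso$-transformation retract with data $(r, h)$ and consider a pushout
\begin{center}
\begin{tikzcd}
X \ar[r, "i"] \ar[d, "f"] & Y \ar[d, "\bar f"] \\
X' \ar[r, "i'"] & Y'
\end{tikzcd}
\end{center}
I obtain $r': Y' \to X'$ from the universal property applied to $f \circ r$ and $\id_{X'}$, which agree on $X$ because $r$ is a retraction. Since $\crcom^\times$ is closed cartesian monoidal (Theorem \ref{thm:crcom-cartesian-monoidal}), the functor $\iso \times -$ preserves colimits, so $\iso \times Y' \cong (\iso \times Y) \sqcup_{\iso \times X} (\iso \times X')$. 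The strong condition on $h$ is precisely what is needed for $\bar f \circ h: \iso \times Y \to Y'$ and $i' \circ (\pi \times X'): \iso \times X' \to Y'$ to agree on $\iso \times X$; their pushout yields $h'$, with the endpoint and strong compatibility conditions for $h'$ following by routine diagram chase. Transfinite composition is handled similarly by iterating this argument and using preservation of filtered colimits.

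The hardest part will be step (a): producing the explicit homotopies $h_n$ requires interleaving the componentwise cartesian structure on $\iso \times \mathbb{D}^n$ with the algebraic relations of the crossed complex, and the low-dimensional cases $n = 1, 2$ (where the groupoid and $2$-cell structure are most delicate) may need separate treatment from the generic $n \ge 3$ case.
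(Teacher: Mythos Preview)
Your approach is correct, but the paper's proof is considerably shorter and avoids essentially all of the work you propose. The paper argues directly: since every object of $\crcom$ is fibrant, an acyclic cofibration $i: X \to Y$ automatically admits a retraction $r: Y \to X$; then, because $\crcom^\times$ is a monoidal model category (Theorem \ref{thm:crcom-cartesian-monoidal}), the pushout-product map
\[
\iso \times X \amalg_{\partial \iso \times X} \partial \iso \times Y \;\longrightarrow\; \iso \times Y
\]
is an acyclic cofibration, and a single lift against it (with target $Y$ and boundary data $(\,i\circ(\pi\times X),\; ir \sqcup \id_Y\,)$) produces the required $h$ together with the strong compatibility square. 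No reduction to generators, no saturation argument.

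Your route trades that one lift for a fair amount of bookkeeping. The explicit homotopies $h_n$ in step (a) are buildable but tedious, and step (b) hides more than you acknowledge: closure under transfinite composition requires the homotopies $h_\alpha$ to be chosen \emph{coherently} so that $h_{\alpha+1}$ restricts to $h_\alpha$ along $\iso \times X_\alpha \hookrightarrow \iso \times X_{\alpha+1}$, which is what allows them to assemble at limit ordinals. This does work---the strong condition on the step homotopy is precisely what forces the second half of the composed homotopy to be constant on the image of $X_\alpha$---but it is not as automatic as ``iterating this argument.'' One genuine payoff of your method is that your pushout-stability argument is exactly the paper's Lemma \ref{lem:iso-retract-pushout-stability}, so you are in effect proving both lemmas simultaneously; the paper instead proves the present lemma slickly and then reuses the strong retract data in a separate pushout-stability lemma afterward.
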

\begin{proof}
Let $i:X \to Y$ be an acylic cofibration. Since $X$ is fibrant, we have a retraction $r: Y \to X$.
Since $\crcom^\times$ is a monoidal model category (Theorem \ref{thm:crcom-cartesian-monoidal}), the pushout product map $\iso \times X \coprod_{\partial \iso \times X}\partial \iso \times Y \to \iso \times Y$ is an acyclic cofibration. Hence we have a lift
\begin{center}
\begin{tikzcd}[sep = huge]
\iso \times X \coprod_{\partial \iso \times X}\partial \iso \times Y \ar[r, "(\pi \times i{,} \id_y \amalg ri)"] \ar[d]
	& Y 
\\
\iso \times Y \ar[ru, dashed, "h"]
\end{tikzcd}
\end{center}
And this lift gives us precisely the data to demonstrate $i$ as a strong $\iso$-transformation retract.
\end{proof}
\begin{lemma}\label{lem:iso-retract-pushout-stability}
Strong $\iso$-transformation retracts are stable under pushouts. 
\end{lemma}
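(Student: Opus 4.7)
Suppose we are given a pushout square
\begin{center}
\begin{tikzcd}
X \ar[r, "f"] \ar[d, "i"] & X' \ar[d, "i'"] \\
Y \ar[r, "g"] & Y'
\end{tikzcd}
\end{center}
in $\crcom$, where $i$ is a strong $\iso$-transformation retract with retraction $r: Y \to X$ and $\iso$-transformation $h: \iso \times Y \to Y$ witnessing $ir \Rightarrow \id_Y$, satisfying the strong condition $h \circ (\iso \times i) = i \circ (\pi \times X)$. The plan is to construct the analogous data $(r', h')$ for $i'$ directly from $(r,h)$ by invoking the universal property of $Y'$.

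First, I define $r': Y' \to X'$ by the universal property of the pushout applied to the two maps $f\circ r : Y \to X'$ and $\id_{X'} : X' \to X'$: these agree on $X$ since $(fr)i = f$ and $\id_{X'} \circ f = f$, so there is a unique $r'$ with $r' g = fr$ and $r' i' = \id_{X'}$. In particular $r'$ is a retraction of $i'$. To define $h'$, I use the fact that in the cartesian closed category $\crcom^\times$ (Theorem \ref{thm:crcom-cartesian-monoidal}), the functor $\iso \times (-)$ preserves colimits, so
$$\iso \times Y' \cong (\iso \times Y)\coprod_{\iso \times X}(\iso \times X').$$
Thus $h'$ is determined by specifying its restriction to each factor: on $\iso \times Y$ take $g \circ h$, and on $\iso \times X'$ take $i' \circ (\pi \times \id_{X'})$. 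Compatibility on $\iso \times X$ amounts to the identity $g \circ h \circ (\iso \times i) = i' \circ (\pi \times \id_{X'}) \circ (\iso \times f)$, which reduces via the strong hypothesis on $h$ to $g \circ i \circ (\pi \times \id_X) = i' \circ f \circ (\pi \times \id_X)$, and this holds because the original square commutes.

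It then remains to verify that $h'$ is a $\iso$-transformation from $i' r'$ to $\id_{Y'}$ and that it satisfies the strong retract condition $h' \circ (\iso \times i') = i' \circ (\pi \times \id_{X'})$. Each identity can be checked by precomposing with $g$ and with $i'$ separately (again using the pushout), and in each case it reduces to the corresponding identity already known for $h$, together with the equations $r' g = f r$, $r' i' = \id_{X'}$ and the pushout commutativity $i' f = g i$. The strong condition is in fact immediate from the construction of $h'$, since its restriction to $\iso \times X'$ was defined to be exactly $i' \circ (\pi \times \id_{X'})$.

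The only step that is not entirely formal is the observation that $\iso \times (-)$ preserves the relevant pushout, which I regard as the main conceptual point; the rest of the argument is a routine diagram chase. Thus $i'$ is again a strong $\iso$-transformation retract, proving the lemma.
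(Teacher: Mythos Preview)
Your proof is correct and follows essentially the same approach as the paper: both construct $r'$ and $h'$ by invoking the universal property of the pushout $Y'$ (and of $\iso \times Y'$, using that $\iso \times (-)$ preserves pushouts), then verify the required identities by diagram chase. You are somewhat more explicit than the paper in justifying why $\iso \times (-)$ preserves the pushout and in spelling out the compatibility checks, but the argument is the same.
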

\begin{proof}
Let $i:X \to Y$ be a strong $\iso$-transformation retract, and $f: X \to X'$ a map of crossed complexes. We wish to prove that in the pushout diagram
\begin{center}
\begin{tikzcd}
X \ar[d, "i"] \ar[r, "f"]
	& X' \ar[d, dashed, "i'"] \\
Y \ar[r, dashed, "g"]
	& Y'
\end{tikzcd}
\end{center}
the arrow $i': X' \to Y'$ is a strong $\iso$-transformation retract. By assumption, we have maps $r: Y \to X$ and $h: Y\times I \to Y$ satisfying the conditions of Definition \ref{dfn:strong-iso-retract}. Consider the following commutative diagrams:
\[
\begin{tikzcd}
X \ar[dd, "i"] \ar[rr, "f"] \ar[dddd, "\id_X", bend right = 25, shift right = 2, swap]
	&& X' \ar[dd, "i'"] \ar[dddd, "\id_{X'}", bend left = 25, shift left = 2] \\\\
Y\ar[dd, "r"] \ar[rr, "g"]
	&& Y' \ar[dd, dashed, "r'"] \\\\
X \ar[rr, "f"] 
	&& X' 
\end{tikzcd}
\qquad
\begin{tikzcd}
& \iso \times X \ar[ldd, "\pi \times X", bend right = 30, swap] \ar[dd, "\iso \times i"] \ar[rr, "J_1 \times f"] 
	& &  \iso \times X' \ar[dd, "\iso \times i'"] \ar[rdd, "\pi \times X'", bend left = 30]  \\\\
X \ar[rdd, "i", bend right = 30, swap]
	& \iso \times Y\ar[dd, "h"] \ar[rr, "J_1 \times g"]
	&& \iso \times Y' \ar[dd, dashed, "h'"]
	& X' \ar[ldd, "i'", bend left = 30] \\\\
&	Y \ar[rr, "f"] 
	&& Y 
\end{tikzcd}
\]
We have used the universal property of the pushout to obtain the maps $r'$ and $h'$. From the commutativity of the diagrams, we verify that $r'$ and $h'$ together give the data to demonstrate that $i'$ is a strong $\iso$-transformation retract, as desired. 
\end{proof}
We can now prove that $\crcom^\times$ satisfies the monoid axiom:
\begin{proof}[Proof of Theorem \ref{thm:cartcrossed-monoid-axiom}]
We wish to prove that the set of transfinite compositions of pushouts with respect to the generating set
$$
\{X \times j \; | \; X \in \crcom \text{ and } j \text{ is an acylic cofibration of } \crcom\}	
$$
consists of weak equivalences. Since every such $j$ is by Lemma \ref{lem:every-acylic-cof-iso-retract} a strong $\iso$-transformation retract, and strong $\iso$-transformation retracts are (from the structure of the definition) stable under taking products with objects, it suffices to prove that transfinite compositions of pushouts with respect to a set of $\iso$-transformation retracts are weak equivalences. 
The statement on pushouts is Lemma \ref{lem:iso-retract-pushout-stability} (along with \ref{lem:every-iso-retract-equivalence}), so the result follows from the fact that weak equivalences are closed under transfinite composition.  
\end{proof}

\section{The Functor $\lefttwo$}\label{appendix:global-strictification-description}
In this appendix we give a (somewhat) explicit description of the functor $\lefttwo$. We have not needed to use any results from this section, and so our treatment is somewhat informal---our goal is not to prove theorems, but only to give the interested reader an understanding of $\lefttwo$. In particular, we wish to motivate calling it a ``linearization.'' 
First we give an example, which will motivate the general case.
\begin{example}\label{example:global-strictification-pushout}
Recall that $\mathbb{D}^1$ is the crossed complex whose underlying groupoid has two objects and one nonidentity isomorphism, and all higher groups are trivial. 
Consider the $\crcom^\otimes$-category $\C$ given as the following pushout in $\tensorcrossedcat$:
\begin{center}
\begin{tikzcd}
\bullet \ar[d, "\iota_0"] \ar[r, "\iota_1"] & \sus(\mathbb{D}^1) \ar[d, dashed]\\
\sus(\mathbb{D}^1) \ar[r, dashed] & \P_{1,1}
\end{tikzcd}
\end{center}
Thus $\P_{1,1}$ has three objects, which we call $\{0,1,2\}$, and $\P_{1,1}[0,1] \cong \P_{1,1}[1,2] \cong \mathbb{D}^1$, while $\P_{1,1}[0,2] \cong \mathbb{D}^1 \otimes \mathbb{D}^1$, while the other hom-objects are either empty or a point. 
It is not hard to prove (see \cite{Strong2026})2w  that for any crossed complex $C$, $\lefttwo(\sus^\otimes(C)) \cong \sus^\times(C)$. So, since $\lefttwo$ must preserve pushouts, we have a pushout diagram
\begin{center}
\begin{tikzcd}
\bullet \ar[d, "\iota_0"] \ar[r, "\iota_1"] & \sus(\mathbb{D}^1) \ar[d, dashed]\\
\sus(\mathbb{D}^1) \ar[r, dashed] & \lefttwo(\P_{1,1})
\end{tikzcd}
\end{center}
And so $\lefttwo(\P_{1,1})[0,2]$ must be $\mathbb{D}^1 \times \mathbb{D}^1$.
The unit map $\P_{1,1} \to \righttwo \circ \lefttwo (\P_{1,1})$ is, restricting to the hom-object $\P_{1,1}[0,2]$, the natural map $\mathbb{D}^1 \otimes \mathbb{D}^1 \to \mathbb{D}^1 \times \mathbb{D}^1$, which we described in Example \ref{example:tensor-v-product-intervals}. 
\end{example}
It follows from the description in Example \ref{example:tensor-v-product-intervals} of the natural map $\mathbb{D}^1 \otimes \mathbb{D}^1 \to \mathbb{D}^1 \times \mathbb{D}^1$ as quotienting out the tensor $\ell \otimes \ell$ that for any arbitrary $\C \in \tensorcrossedcat$, the unit map $\C \to \righttwo \circ \lefttwo \C$ must ``quotient out'' any $\alpha \in \C[x,y]_2$ which can be written as a composition of two $1$-cells. Similar statements could be made about compositions of $m$-cells and $n$-cells for $m,n \neq 0$, and in the rest of this section we will show that this behaviour completely describes $\lefttwo$.
\par
Recall that for crossed complexes $C$ and $D$ we have a map $C \otimes D \to C \times D$, described on simple tensors by
$$
a_m \otimes b_n \mapsto 
\begin{cases}
0 & \text{ if } m > 0 \text{ and } n > 0 \\
(\id_{a_m},b_n) & \text{ if } m = 0, n > 0 \\
(a_m, \id_{b_n}) & \text{ if } m >0, n =0 \\
(a_m,b_n) & \text{ if } m=n=0
\end{cases}
$$
From this it is easy to see that this map will be surjective. Further, from this description we will see that the degreewise kernel is generated by tensors $a_m \otimes b_n$ where $m$ and $n$ are strictly positive, along with boundaries of such elements. First, let us clarify what we mean by ``kernel:"
\begin{definition}
For $G$ and $H$ groupoids on the same object set, and $f: G \to H$ a map of groupoids which is identity-on-objects, the \emph{kernel} of $f$, $\ker(f)$, is the subgroupoid of $G$ consisting of all objects of $G$ along with all morphisms which are sent to identities by $f$.
\end{definition} 
As with other kernels, for any surjective morphism of groupoids $f: G \to H$ which is identity-on-objects, we have a pushout square
\begin{center}
\begin{tikzcd}
\ker(f) \ar[r] \ar[d]
& G \ar[d, "f"] \\
G_0 \ar[r] 
& H
\end{tikzcd}
\end{center} 
where $G_0$ is the trivial groupoid on the object set $G_0 = H_0$. Since $C \otimes D$ and $C\times D$ have the same object set, $C_0 \times D_0$, we can describe the surjective map $C \otimes D \to C \times D$ in terms of kernels of this kind.
\subsection*{Description of the kernel of $\mathbf{C \otimes D \to C \times D}$}
For crossed complexes $C$ and $D$, the degreewise kernels of the natural map $C \otimes D \to C \times D$ can be described as follows:
\begin{itemize}
\item The kernel of $(C \otimes D)_1 \to (C\times D)_1$ is generated by boundaries of tensors $c_1 \otimes d_1 \in (C\otimes D)_2$.
\item The kernel of $p_*(C \otimes D)_2 \to (C \times D)_2$ is generated as a (crossed) module over $C_1 \times D_1$ by tensors $c_1 \otimes d_1$ as well as boundaries of elements $c_i \otimes d_j \in (C \otimes D)_3$ where both $i$ and $j$ are nonzero. Here by $p_*$ we mean the pushforward (or ``induced'') crossed module along the map $p: (C\otimes D)_1 \to (C\times D)_1$.
\item For $n \ge 3$, the kernel of $(C \otimes D)_n \to (C \times D)_n$ is generated as a module over $\Pi_1(C\otimes D) \cong \Pi_1(C\times D)$ by tensors $c_i \otimes d_j$ where $i+j = n$ and $i,j \neq 0$, as well as boundaries of such elements.
\end{itemize}
The degree $1$ description follows from the description of $(C \otimes D)_1$ as a pushout in \cite{Brown_Higgins_Sivera_2011}. 
The degree $2$ description similarly follows from the description there of the crossed module part of $C\otimes D$ as a coproduct. 
The higher $n$ are simpler: for $n \ge 3$, $(C\otimes D)_n$ and $(C \times D)_n$ are modules over the groupoid $\Pi_1(C \otimes D) \cong \Pi_1(C \times D)$, so this follows from the description of the map on generators, along with the observation that $(C\times D)_n$ is isomorphic to the direct sum of the modules generated by simple tensors $c_0 \otimes d_n$ and $c_m \otimes d_0$.
\par
For the rest of this section, we will go about defining a functor $\tencart{(-)}: \tensorcrossedcat \to \cartcrossedcat$, and then we will show that it is left adjoint to $\righttwo$. 
\subsection*{Description of the functor on objects:} We define $\tencart{\C}$ to have the same objects in $\C$, and hom objects as follows: for $a,c \in \ob(\C)$, we let 
$$
\tencart{\C}[a,c]_0 = \C[a,b]_0
$$
$$\tencart{\C}[a,b]_i = \C[a,b]_i/K$$
Where $K$ is the subgroupoid of $\C[a,b]_i$ generated by the union over all tuples $r_1,...,r_n \in \ob(C)$ of the image of the kernel of the maps $\C[a,r_1] \otimes \cdots \otimes \C[r_n,b] \to \C[a,r_1] \times \cdots \times \C[r_n,b]$ under the composition map $\circ : \C[a,r_1] \otimes \cdots \otimes \C[r_n,b] \to \C[a,b]$. 
In other words, $K$ is generated by compositions $\circ(x_1 \otimes \cdots \otimes x_n)$ where each $x_n$ is in degree strictly positive; along with boundaries of such elements.
This describes the objects and hom-objects of $\tencart{\C}$, but not the composition operation. To define the composition operation, we must define maps 
$$\tencart{\circ}: \tencart{\C}[a,b] \times \tencart{\C}[b,c] \to \tencart{\C}[a,c]$$
On objects, we can let this be the same as $\circ$. On higher cells, we define $\tencart{\circ}([a],[b]) = [\circ(a\otimes s(b) \cdot s(a) \otimes b)]$, where $s$ denotes the source object of a morphism in a groupoid.
We must prove that this is independent of the choice of representatives $a$ and $b$. Let us suppose that $[a] = [a']$. 
Then $a$ and $a'$ differ by some combination of elements in the images of kernels of maps $\C[a,r_1] \otimes \cdots \otimes \C[r_n,b] \to \C[a,r_1] \times \cdots \times \C[r_n,b]$. We write
$$a^{-1}a' = k_1 \cdots k_m$$
and then note that this gives us that 
\begin{align*}
\circ(a \otimes s(b))^{-1} \circ(a' \otimes s(b)) &= \circ(a^{-1} a' \otimes s(b)) \\
&= \circ(k_1 \cdots k_m \otimes s(b)) \\
&= \circ(k_1 \otimes s(b)) \cdots \circ (k_m \otimes s(b)) \\
\end{align*}
By our description of the kernels in terms of the degrees, we have that $\pi(k_i \otimes s(b)) = 0$, and hence by definition $\circ(k_i \otimes s(b))$ is $0$ in $\tencart{\C}[a,c]$, as desired. We can symmetrically argue that this is independent of the choice of representative $b$. 
\subsection*{Description of the functor on morphisms:} Given a morphism $F: \C \to \D$ in $\tensorcrossedcat$, we define $\tencart{F}$ to be the same as $F$ on objects, and on hom-objects induced by the description of the hom-objects of $\tencart{\C}$ and $\tencart{\D}$ as quotients. 
Here we are using that since $F$ is a functor between enriched categories, it preserves compositions, and so it maps the kernel of the map $\C[a,b]_i \to \tencart{\C}[a,b]_i$ into the kernel of the map $\D[Fa,Fb]_i \to \tencart{\D}[Fa,Fb]_i$. 
\subsection*{Proof that this defines an adjoint:}
Note that the functor $\righttwo$ is fully faithful, which is fairly straightforward from its definition as the identity on hom-objects. 
So to show that $(\tencart{-})$ defines an adjoint to $\righttwo$, it suffices to show that for $\C \in \tensorcrossedcat$, the map $\C \to \tencart{\C}$ is initial among maps $\C \to \righttwo(\D)$. 
Since this map is a quotient on hom-objects, it suffices to show that the elements we quotient by are sent to identities in $\righttwo(\D)$, and that the induced maps on hom-objects assemble to form a functor of enriched categories; i.e. they commute with composition. The first statement follows immediately from the descriptions in terms of dimensions. The second statement follows from observing that the map $C \otimes D \to C \times D$ sends $c_m \otimes d_0 \cdot c_0 \otimes d_n$ to $(c_m, \id_{d_0})\cdot (\id_{c_0},d_n) =(c_m,d_n)$.

\end{document}